\def\Int{\operatorname{Int}}
\def\Cl{\operatorname{Cl}}
\def\sfxy{$f:(X, \T, E)\to (Y, \S, E')\ $}
\newtheorem{theorem}{Theorem}[section]
\newtheorem{proposition}{Proposition}[section]
\newtheorem{lemma}{Lemma}[section]
\newtheorem{remark}{Remark}[section]
\newtheorem{corollary}{Corollary}[section]
\newtheorem{definition}{Definition}[section]
\newtheorem{example}{Example}[section]
\begin{document}
%--------------------------------------------------------------------------------------------------
\setcounter{page}{1}
\title[Soft Somewhat Continuous and Soft Somewhat Open Functions]{\textit{Accepted in TWMS J. App. and Eng. Math. 2021}\\
	\vspace{1cm}Soft Somewhat Continuous and Soft Somewhat Open Functions}
\author{Zanyar A. Ameen}
\address{Deptartment of Mathematics, College of Science, University of Duhok, Duhok-42001, IRAQ}
\email{zanyar@uod.ac}

\author{Baravan A. Asaad}
\address{Deptartment of Mathematics, Faculty of Science, University of Zakho, Duhok-42002, IRAQ}
\address{Department of Computer Science, College of Science, Cihan University-Duhok, IRAQ}
\email{baravan.asaad@uoz.edu.krd}	

\author{Tareq M. Al-shami}
\address{Department of Mathematics, Sana’a University, P.O.Box 1247 Sana’a, Yemen}
\email{tareqalshami83@gmail.com}
	\begin{abstract}
		In this paper, we define a soft somewhat open set using the soft interior operator. We study main properties the class of soft somewhat open sets that is contained in the class soft somewhere dense sets. Then, we introduce the classes of soft somewhat continuous and soft somewhat open functions and soft somewhat homeomorphisms. Moreover, we study properties and characterizations of soft somewhat continuous and soft somewhat open functions. At last, we discuss topological invariants for soft somewhat homeomorphisms. Multiple examples are offered to clarify some invalid results.
		
		\noindent Keywords: soft semicontinuity, soft $\beta$-continuity, soft somewhat continuity, soft somewhat open, soft somewhere dense continuity.\\
		
		\noindent AMS Subject Classification: 54C08, 54C10, 03E72 %American Mathematical Society Subject Classification
		
	\end{abstract}
	\maketitle 
	\bigskip
	\def\P{\mathcal{P}}
	\def\T{\mathcal{T}}
	\def\S{\mathcal{S}}
	%
	%%%%%%%%%%%%%%%%%%%%%%%%%%%%%%%%%%%%%%%%%%%%%%%%%%%%%%%%%%%%%%%%%%%%%%%%%%%%%%%%%%%%%
	\section{Introduction} \label{s1}

	In 1999, Molodtsov \cite{mold1999} suggested a different approach for dealing with problems of incomplete information under the name of soft set theory. This notion has been utilized in many directions, like: smoothness of function, Riemann integration, theory
	of measurement, probability theory, game theory and so on. The core concept of the theory of soft set is the nature of sets of parameters that provides a general framework for modeling uncertain data. This essentially contributes to the development of soft set theory during a short period of time. Maji et al. \cite{maji2003onsoft} studied  a (detailed) theoretical structure of soft set theory. In particular, they established some  operators and operations between soft sets. Then, some mathematicians reformulated the operators and operations between soft sets given in Maji et al.'s work as well as proposed different types of them; to see the recent contributions concerning soft operators and operations, we refer the reader to \cite{Al-shami}.
	
	In 2011, the concept of soft (general) topology was defined by Shabir and Naz \cite{shabir2011onsoft} and \c{C}a\u{g}man et al. \cite{cagman2011soft} independently. In 2013, Nazmul and Samanta \cite{softcont} defined soft continuity of functions. Then various generalizations of soft continuity and soft openness of functions appeared in the literature. For instance, soft $\alpha$-continuous functions \cite{akdag2014alpha}, soft semicontinuous functions \cite{softsemicont}, soft $\beta$-continuous functions \cite{yumak2015beta}, soft somewhere dense continuous \cite{shami2020sd-cont}, soft $\alpha$-open functions \cite{akdag2014alpha}, soft semi-open functions \cite{softsemicont}, soft $\beta$-open functions \cite{yumak2015beta}, soft somewhere dense open \cite{shami2020sd-cont}, and so on. Different kinds of belong and nonbelong relations were studied in \cite{shabir2011onsoft, El-shafei2018}. These relations led to the variety and abundance of the forms of the concepts and notions on soft topology.
	
	After this brief introduction, we recollect some preliminaries concepts in Section \ref{se2}. Then, we devote Section \ref{se3} to introduce the concept of soft somewhat open sets and study its relationships with some generalizations of soft open sets. The goals of Section \ref{se4} and Section \ref{se5} are to investigate soft somewhat continuous functions and soft somewhat open functions which are respectively weaker than soft semicontinuous and soft semi-open functions but stronger than soft somewhere dense continuous and soft somewhere dense open functions. In Section \eqref{se7}, we make a conclusion and propose some further works.

	\section{Preliminaries}\label{se2}\
	
	This section presents some basic definitions and notations that will be used in the sequel. Henceforth, we mean by $X$ an initial universe, $E$ a set of parameters and $\P(X)$ the power set of $X$.

	\begin{definition}\cite{mold1999}
		A pair $(F,E)=\{(e,F(e)):e\in E\}$ is said to be a soft set over $X$, where $F:E\to\P(X)$ is a (crisp) map. We write $F_E$ in place of the soft set $(F,E)$.
		
		The class of all soft sets on $X$ is symbolized by $SS_E(X)$ $($or simply $SS(X))$. If $A\subseteq E$, then it will be symbolized by $SS_A(X)$.
	\end{definition}
	
	\begin{definition}\cite{ramz,softcont}
		A soft set $F_E$ over $X$ is called:
		\begin{enumerate}[(i)]
			\item a soft element if $F(e) = \{x\}$ for all $e\in E$, where $x\in X$. It is denoted by $\{x\}_E$ $($or shortly $x)$.
			\item a soft point if there are $e\in E$ and $x\in X$ such that $F(e) = \{x\}$ and $F(e')= \emptyset$ for each $e'\neq e$. It is denoted by $P^x_e$. An expression $P^x_e\in F_E$ means that $x\in F(e)$.
		\end{enumerate}
	\end{definition}

	\begin{definition}\cite{ali2009onsome}
		The complement of $F_E$ is a soft set $X_E\setminus F_E$ $($or simply $F_E^c)$, where $F^c:E\to\P(X)$ is given by $F^c(e) = X\setminus F(e)$ for all $e\in E$.
	\end{definition}
	
	\begin{definition}\cite{mold1999}
		A soft subset $F_E$ over $X$ is called
		\begin{enumerate}[(i)]
			\item null if $F(e)=\emptyset$ for any $e\in E$.
			\item absolute if $F(e) = X$ for any $e\in E$.
		\end{enumerate}
		The null and absolute soft sets are respectively symbolized by $\Phi_E$ and $X_E$.\\
		Clearly, $X^c_E=\Phi_E$ and $\Phi_E^c=X_E$.
	\end{definition}
	
	\begin{definition}\cite{maji2003onsoft}
		Let $A, B\subseteq E$. It is said that $G_A$ is a soft subset of $H_B$ $($written by $G_A\sqsubseteq H_B)$ if $A\subseteq B$ and $F(e)\subseteq G(e)$ for any $e\in A$. We call $G_A$ soft equals to $H_B$ if $G_A\sqsubseteq H_B$ and $H_B\sqsubseteq G_A$.
	\end{definition}
	
	The definitions of soft union and soft intersection of two soft sets with respect to arbitrary subsets of $E$ was given by Maji et al. \cite{maji2003onsoft}. But it turns out that these definitions are misleading and ambiguous as reported by Ali et al. \cite{ali2009onsome}. Therefore, we follow the definitions given by Ali et al. \cite{ali2009onsome} and M. Terepeta \cite{terepeta}.
	
	\begin{definition}
		Let $\{F^\alpha_E:\alpha\in\Lambda\}$ be a collection of soft sets over $X$, where $\Lambda$ is any indexed set.
		\begin{enumerate}%[(i)]
			\item The intersection of $F^\alpha_E$, for $\alpha\in\Lambda$, is a soft set $G_E$ such that $G(e)=\bigcap_{\alpha\in\Lambda}F^\alpha(e)$ for each $e\in E$ and denoted by $G_E=\bigsqcap_{\alpha\in\Lambda}F^\alpha_E$.
			\item The union of $F^\alpha_E$, for $\alpha\in\Lambda$, is a soft set $G_E$ such that $G(e)=\bigcup_{\alpha\in\Lambda}F^\alpha(e)$ for each $e\in E$ and denoted by $G_E=\bigsqcup_{\alpha\in\Lambda}F^\alpha_E$.
		\end{enumerate}
	\end{definition}

	\begin{definition}\cite{shabir2011onsoft}
		A subfamily $\mathcal{T}$ of $SS_E(X)$ is called a soft topology on $X$ if
		\begin{enumerate}[(c1)]
			\item $\Phi_E$ and $X_E$ belong to $\mathcal{T}$,
			\item finite intersection of sets from $\mathcal{T}$ belongs to $\mathcal{T}$, and
			\item any union of sets from $\mathcal{T}$ belongs to $\mathcal{T}$.
		\end{enumerate}
		Terminologically, we call $(X, \T, E)$ a soft topological space on $X$. The elements of  $\T$ are called soft open sets, and their complements are called soft closed sets.
	\end{definition}
	
	Henceforward, $(X, \T, E)$ means a soft topological space.
	%\begin{definition}\cite{cagman2011soft}	A subfamily $\mathcal B\subseteq\T$ is called a soft base for the soft topology $\T$ if each element of $\T$ is a union of elements of $\mathcal B$.\end{definition}
	
	\begin{definition}\cite{shabir2011onsoft}
		Let $Y_E$ be a non-null soft subset of $(X, \T, E)$. Then $\T_Y:=\{G_E\bigsqcap Y_E:G_E\in\T\}$ is called a soft relative topology on $Y$ and $(Y, \T_Y, E)$ is a soft subspace of $(X, \T, E)$.
	\end{definition}
	
	\begin{definition}\cite{shabir2011onsoft}
		Let $F_E$ be a soft subset of $(X, \T, E)$. The soft interior of $F_E$ is the largest soft open set contained in $F_E$ and denoted by $\Int_X(F_E)$ $($or shortly $\Int(F_E))$. The soft closure of $F_E$ is the smallest soft closed set which contains $F_E$ and denoted by $\Cl_X(F_E)$ $($or simply $\Cl(F_E))$.
	\end{definition}
	
	\begin{lemma}\cite{hussein2011some}
		For a soft subset $G_E$ of $(X, \T, E)$, $\Int(G^c_E)=(\Cl(G_E))^c$ and $\Cl(G^c_E)=(\Int(G_E))^c$.
	\end{lemma}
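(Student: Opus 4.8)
The plan is to reduce both identities to the defining descriptions of soft interior and soft closure, together with the behaviour of soft complementation under soft unions and soft inclusions. First I would record the two elementary facts that drive everything: soft complementation is an involution, $(G_E^c)^c = G_E$, and it reverses soft inclusion, so that $G_E \sqsubseteq H_E$ holds precisely when $H_E^c \sqsubseteq G_E^c$. Both are immediate from the pointwise definition $G^c(e) = X \setminus G(e)$ and the corresponding facts on $\P(X)$. I would also note the soft De Morgan law $\left(\bigsqcap_{\alpha} F^{\alpha}_E\right)^c = \bigsqcup_{\alpha} (F^{\alpha}_E)^c$, which again follows parameterwise from ordinary De Morgan, since $\bigsqcap$ and $\bigsqcup$ are defined by intersecting and unioning the coordinate sets $F^{\alpha}(e)$.

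For the first identity, I would write the closure as the soft intersection of all soft closed supersets of $G_E$, say $\Cl(G_E) = \bigsqcap \{K_E : K_E^c \in \T,\ G_E \sqsubseteq K_E\}$; this is legitimate because the displayed intersection is itself soft closed (its complement is a union of soft open sets, hence soft open) and contains $G_E$, so it is the smallest such set. Taking complements and applying the De Morgan law converts this into $\left(\Cl(G_E)\right)^c = \bigsqcup \{K_E^c : K_E^c \in \T,\ G_E \sqsubseteq K_E\}$. Reindexing by $U_E = K_E^c$, and using that $K_E$ is soft closed exactly when $U_E$ is soft open together with the inclusion-reversal $G_E \sqsubseteq K_E \Leftrightarrow U_E \sqsubseteq G_E^c$, this becomes the soft union of all soft open sets contained in $G_E^c$, which is exactly $\Int(G_E^c)$ by the definition of the soft interior. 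Hence $\Int(G_E^c) = \left(\Cl(G_E)\right)^c$.

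For the second identity I would avoid repeating the argument and instead substitute $G_E^c$ for $G_E$ in the identity just proved. This yields $\Int\!\left((G_E^c)^c\right) = \left(\Cl(G_E^c)\right)^c$, and the involution $(G_E^c)^c = G_E$ simplifies the left-hand side to $\Int(G_E)$; taking complements of both sides and invoking the involution once more delivers $\Cl(G_E^c) = \left(\Int(G_E)\right)^c$, as required.

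I do not anticipate a genuine obstacle here: the only point that needs care is justifying that the displayed intersection really computes the closure, namely that it is soft closed and is the smallest soft closed superset of $G_E$, which rests on the topology axiom that arbitrary soft unions of soft open sets are soft open. Everything else is bookkeeping that descends, parameter by parameter, to elementary set algebra on $\P(X)$.
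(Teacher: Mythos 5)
Your proof is correct. The paper itself offers no proof of this lemma --- it is imported from \cite{hussein2011some} --- and your complement-duality argument (representing $\Cl(G_E)$ as the soft intersection of all soft closed supersets, passing through the parameterwise De Morgan law and inclusion reversal to obtain the soft union of all soft open subsets of $G_E^c$, then deriving the second identity by substituting $G_E^c$ and using involutivity) is the standard proof of exactly this fact; the only implicit step, that the soft union of all soft open subsets of $G_E^c$ is indeed the \emph{largest} soft open set contained in $G_E^c$ as the paper's definition of $\Int$ requires, follows from the same topology axiom (arbitrary soft unions of soft open sets are soft open) that you already invoke for the closure side.
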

	
	\begin{definition}
		A soft subset $G_E$ of $(X, \T, E)$ is called
		\begin{enumerate}[(i)]
			%	\item soft $\alpha$-open \cite{akdag2014alpha} if $G_E\sqsubseteq\Int(\Cl(\Int(G_E)))$,
			\item soft dense if $\Cl(G_E)=X_E$,
			\item soft co-dense if $\Int(G_E)=\Phi_E$
			\item soft semiopen \cite{chen2013semi} if $G_E\sqsubseteq\Cl(\Int(G_E))$,
			\item soft $\beta$-open \cite{yumak2015beta} if $G_E\sqsubseteq\Cl(\Int(\Cl(G_E)))$,
			\item soft somewhere dense \cite{shami2018somewhere} if $\Int(\Cl(G_E))\ne\Phi_E$ $($For a better connection between these soft sets, we force $\Phi_E$ to be soft somewhere dense$)$.
		\end{enumerate}
	\end{definition}
	
	We call $F_E$ a countable soft set if $F(e)$ is countable for each $e\in E$.

	\begin{definition}
		A soft topological space $(X, \T, E)$ is called
		\begin{enumerate}[(i)]
			\item soft separable \cite{rong-separable} if it has a countable soft dense subset.
			\item soft hyperconnected \cite{kandil2014hyperconnected} if any pair of non-null soft open subsets intersect.
			\item soft connected \cite{connected} if it cannot be written as a union of two disjoint soft open sets.
			\item soft compact \cite{compact} if every cover of $X$ by soft open sets has a finite subcover. It is soft locally compact if each soft point has a soft compact neighborhood.
			\item soft metrizable \cite{metric} if $\T$ is induced by soft metric space.
		\end{enumerate}
	\end{definition}
	
	\begin{definition}\label{T_i}\cite{shabir2011onsoft,bayramov}
		A soft topological space $(X, \T, E)$ is called
		\begin{enumerate}[(i)]
			\item soft $T_0$ if for each $P^x_e,P^y_e\in X$ with $P^x_e\ne P^y_e$, there exist soft open sets $G_E, H_E$ such that $P^x_e\in G_E$, $P^y_e\notin G_E$ or $P^y_e\in H_E$, $P^x_e\notin H_E$.
			\item soft $T_1$ if for each $P^x_e,P^y_e\in X$ with $P^x_e\ne P^y_e$, there exist soft open sets $G_E, H_E$ such that $P^x_e\in G_E$, $P^y_e\notin G_E$ and $P^y_e\in H_E$, $P^x_e\notin H_E$,
			\item soft $T_2$ $($soft Hausdorff$)$ if for each $P^x_e,P^y_e\in X$ with $P^x_e\ne P^y_e$, there exist soft open sets $G_E, H_E$ containing $P^x_e, P^y_e$ respectively such that $G_E\bigsqcap H_E=\Phi_E$. 
		\end{enumerate}
	\end{definition}

	\begin{definition}
		Let $(X, \T, E)$ and $(Y, \S, E')$ be soft topological spaces. A soft function $f:(X, \T, E)\to(Y, \S, E')$ is called
		\begin{enumerate}[(i)]
			\item soft continuous \cite{softcont} (resp., soft semicontinuous \cite{softsemicont}, soft $SD$-continuous \cite{shami2020sd-cont}, soft $\beta$-continuous \cite{yumak2015beta}) if the inverse image of each soft open subset of $(Y, \S, E')$ is a soft open (resp., soft semiopen, soft somewhere dense, $\beta$-open) subset of $(X, \T, E)$.
			\item soft open \cite{softcont} (resp., soft semiopen \cite{softsemicont}, soft $SD$-open \cite{shami2020sd-cont}, soft $\beta$-open \cite{yumak2015beta}) if the image of each soft open subset of $(X, \T, E)$ is a soft open (resp., soft semiopen, soft somewhere dense, $\beta$-open) subset of $(Y, \S, E')$.
			%\item soft semiopen \cite{softsemicont} if the image of each soft open subset of $(X, \T, E)$ is a soft semiopen subset of $(Y, \S, E')$,
			%\item soft $SD$-open \cite{shami2020sd-cont} if the image of each soft open subset of $(X, \T, E)$ is a soft somewhere dense subset of $(Y, \S, E')$,
			\item soft homeomorphism \cite{softcont} if it is one to one soft open and soft continuous from $(X, \T, E)$ onto $(Y, \S, E')$.
		\end{enumerate}
	\end{definition}
	
	For the definition of soft functions between collections of all soft sets, we refer the reader to \cite{kharal2011softmapping}. Henceforward, by the word "function" we mean "soft function".

	\section{Soft Somewhat Open Sets}\label{se3}
	In this section, we introduce the concept of soft somewhat open sets and establish main properties. With the help of examples, we show the relationships between soft somewhat open sets and some generalizations of soft open sets such that soft semiopen and soft somewhere dense sets.
	\begin{definition}\label{defn1}
		A subset $G_E$ of a soft topological space $(X, \T, E)$ is said to be soft somewhat open $($briefly soft $sw$-open$)$ if either $G_E$ is null or $\ \Int(G_E)\neq\Phi_E$.
	\end{definition}
	
	The complement of each soft $sw$-open set is called soft $sw$-closed. That is, a set $F_E$ is soft $sw$-closed if $\Cl(F_E)\neq X_E$ or $F_E=X_E$.
	
	\begin{remark}\label{defn-sw}
		Let $(X, \T, E)$ be a soft topological space.
		\begin{enumerate}[(a)]
			\item A non-null set $G_E$ over $X$ is soft $sw$-open iff there is a soft open set $U_E$ such that $\Phi_E\ne U_E\sqsubseteq G_E$.
			\item A proper set $H_E$ over $X$ is soft $sw$-closed iff there is a soft closed set $F_E$ such that $H_E\sqsubseteq F_E\ne X_E$.
		\end{enumerate}
	\end{remark}
	
	\begin{proposition}\label{}
		\begin{enumerate}[(a)]
			\item Every superset of a soft $sw$-open set is soft $sw$-open.
			\item Every subset of a soft $sw$-closed set is soft $sw$-closed.
		\end{enumerate}
	\end{proposition}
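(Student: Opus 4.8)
The plan is to derive both parts from the monotonicity of the soft interior and soft closure operators, which is immediate from their definitions as, respectively, the largest soft open set contained in, and the smallest soft closed set containing, a given soft set: if $G_E\sqsubseteq H_E$ then $\Int(G_E)$ is a soft open set sitting inside $H_E$ and hence $\Int(G_E)\sqsubseteq\Int(H_E)$, and dually $\Cl(G_E)\sqsubseteq\Cl(H_E)$. The characterizations in Remark \ref{defn-sw} then let me phrase the arguments cleanly in terms of a ``witnessing'' soft open (resp.\ soft closed) set, and the duality $\Int(G^c_E)=(\Cl(G_E))^c$ will convert (b) into (a).

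For part (a), I would take a soft $sw$-open set $G_E$ with $G_E\sqsubseteq H_E$. The substantive case is $G_E$ non-null: by Remark \ref{defn-sw}(a) there is a soft open set $U_E$ with $\Phi_E\ne U_E\sqsubseteq G_E$, and transitivity of $\sqsubseteq$ gives $\Phi_E\ne U_E\sqsubseteq H_E$, so $H_E$ contains a non-null soft open set and is therefore soft $sw$-open by Remark \ref{defn-sw}(a) again. Equivalently, without invoking the remark, monotonicity yields $\Phi_E\ne\Int(G_E)\sqsubseteq\Int(H_E)$, whence $\Int(H_E)\ne\Phi_E$.

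For part (b), the cleanest route is to deduce it from (a) by complementation. First I would record that, by the very definition of soft $sw$-closedness, a set $F_E$ is soft $sw$-closed precisely when $F_E^c$ is soft $sw$-open. Then, given $K_E\sqsubseteq F_E$ with $F_E$ soft $sw$-closed, taking complements reverses the inclusion to $F_E^c\sqsubseteq K_E^c$, where $F_E^c$ is soft $sw$-open; part (a) makes $K_E^c$ soft $sw$-open, and so $K_E=(K_E^c)^c$ is soft $sw$-closed. Alternatively one argues directly with Remark \ref{defn-sw}(b): a witnessing soft closed set $C_E$ with $F_E\sqsubseteq C_E\ne X_E$ also witnesses $K_E\sqsubseteq C_E\ne X_E$; or one simply observes $\Cl(K_E)\sqsubseteq\Cl(F_E)\ne X_E$.

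Since every step is a one-line consequence of monotonicity together with the definitions, there is no serious obstacle. The only point demanding attention is the handling of the extreme soft sets, namely $\Phi_E$ in (a) and $X_E$ in (b): here the witnessing-set characterization of Remark \ref{defn-sw} does not apply, so one must fall back on the explicit ``null'' (resp.\ ``absolute'') clause of the definition, which is why I keep the non-null/proper case and the degenerate case separate throughout.
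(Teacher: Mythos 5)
Your monotonicity argument is exactly the ``Straightforward'' proof the paper leaves implicit: for non-null $G_E\sqsubseteq H_E$, $\Phi_E\ne\Int(G_E)\sqsubseteq\Int(H_E)$ settles (a), and your reduction of (b) to (a) by complementation (or the direct observation $\Cl(K_E)\sqsubseteq\Cl(F_E)\ne X_E$) is equally fine. Up to that point the proposal is correct and coincides with the intended one-line argument.

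The one genuine flaw is your closing paragraph. You claim the extreme cases --- $G_E=\Phi_E$ in (a) and $F_E=X_E$ in (b) --- are disposed of by ``falling back on the explicit null (resp.\ absolute) clause of the definition.'' That clause only certifies that $\Phi_E$ itself is soft $sw$-open; it says nothing about its supersets, and nothing could: every soft set is a superset of $\Phi_E$, so if the case $G_E=\Phi_E$ were covered, every soft set would be soft $sw$-open, contradicting the paper's own observation that $\mathbb{Q}_E$ fails to be soft $sw$-open in the topology of Example \ref{intersection}. Dually, $X_E$ is soft $sw$-closed while every soft set is a subset of $X_E$. So the degenerate cases are not a routine check you postponed --- they are precisely where the literal statement breaks down, and the honest resolution is to read the proposition (as the authors evidently intend, cf.\ Remark \ref{defn-sw}) as concerning non-null soft $sw$-open sets and proper soft $sw$-closed sets; under that reading your substantive argument is a complete proof. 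As written, though, your final paragraph asserts a repair of the extreme cases that does not exist.
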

	\begin{proof}
		Straightforward.
	\end{proof}
	
	\begin{proposition}
		A non-null soft set is soft $sw$-open iff it is a soft neighbourhood of a soft point.
	\end{proposition}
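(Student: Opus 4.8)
The plan is to split the biconditional into its two implications and lean almost entirely on Remark~\ref{defn-sw}(a), which already packages the equivalence of soft $sw$-openness with containing a non-null soft open subset. First I would fix the working definition of a soft neighbourhood: a soft set $G_E$ is a soft neighbourhood of a soft point $P^x_e$ if there is a soft open set $U_E$ with $P^x_e\in U_E\sqsubseteq G_E$. Consequently, the phrase ``$G_E$ is a soft neighbourhood of a soft point'' asserts only the existence of \emph{some} soft point admitting such a witnessing $U_E$.

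For the forward direction, I would assume $G_E$ is non-null and soft $sw$-open. By Definition~\ref{defn1} this forces $\Int(G_E)\neq\Phi_E$. Since the soft interior is non-null, I would extract a soft point $P^x_e\in\Int(G_E)$, that is, choose $e\in E$ and $x\in X$ with $x\in\Int(G)(e)$, which is possible precisely because $\Int(G_E)$ is not the null soft set. Taking $U_E=\Int(G_E)$, which is soft open and satisfies $\Int(G_E)\sqsubseteq G_E$ by the definition of the soft interior, exhibits $G_E$ as a soft neighbourhood of the soft point $P^x_e$.

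For the converse, I would assume $G_E$ is a soft neighbourhood of some soft point $P^x_e$, so there is a soft open $U_E$ with $P^x_e\in U_E\sqsubseteq G_E$. Because $P^x_e\in U_E$ gives $x\in U(e)\neq\emptyset$, the set $U_E$ is non-null, so $\Phi_E\neq U_E\sqsubseteq G_E$ with $U_E$ soft open. Applying Remark~\ref{defn-sw}(a) then immediately yields that $G_E$ is soft $sw$-open.

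The whole argument is essentially a repackaging of Remark~\ref{defn-sw}(a), so I do not expect a genuine obstacle; the only point requiring care is the bookkeeping in the forward direction, namely confirming that non-nullity of $\Int(G_E)$ really does produce a soft point lying inside it. This is exactly what it means for a soft set to differ from $\Phi_E$, so it is routine, but it is the one step where the definitions must be unpacked rather than quoted.
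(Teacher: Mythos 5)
Your proof is correct and follows essentially the same route as the paper's: both directions reduce to the characterization in Remark~\ref{defn-sw}(a), with the forward direction extracting a soft point from a non-null soft open subset of $G_E$ (the paper takes the witnessing $U_E$ from the remark, you take $U_E=\Int(G_E)$, which is the same thing) and the converse noting that $\Phi_E\neq U_E\sqsubseteq G_E$ forces $\Int(G_E)\neq\Phi_E$. No gaps; the bookkeeping about non-nullity producing a soft point is exactly the unpacking the paper leaves implicit.
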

	\begin{proof}
		Let $G_E$ be a non-null soft $sw$-open set. Then there is a soft open set $U_E$ such that $\Phi_E\ne U_E\sqsubseteq G_E$. Therefore, $G_E$ is a soft neighbourhood of all soft points in $U_E$. Conversely, let $G_E$ be a soft neighbourhood of a soft point $P^x_e$. Then there is a soft open set $U_E$ such that $P^x_e\in U_E\sqsubseteq G_E$. Hence, we obtain $\Int(G_E)\neq \Phi_E$, as required.
	\end{proof}
	
	\begin{proposition}\label{union-sw}
		Any union of soft $sw$-open sets is soft $sw$-open.
	\end{proposition}
	\begin{proof}
		Let $\{G_E^{\alpha}:\alpha\in\Lambda\}$ be any collection of soft $sw$-open subsets of a soft topological space $(X, \T, E)$. Now
		\begin{eqnarray*}
			\Int(\bigsqcup_{\alpha\in\Lambda} G_E^\alpha)&\sqsupseteq&\bigsqcup_{\alpha\in\Lambda}\Int(G_E^\alpha))\neq\Phi_E.
		\end{eqnarray*}
		Thus $\bigsqcup_{\alpha\in\Lambda} G_E^\alpha$ is soft $sw$-open.
	\end{proof}
	
	\begin{corollary}
		Any intersection of soft $sw$-closed sets is soft $sw$-closed.
	\end{corollary}
	
	The intersection of two soft $sw$-open sets need not be soft $sw$-open, as showing in the next example:
	\begin{example}\label{intersection}
		Let $\mathbb{R}$ be the set of real numbers and $E=\{e_1, e_2\}$ be a set of parameters. Let $\T$ be the soft topology on $\mathbb{R}$ generated by  $\{(e_i, B(e_i)):B(e_i)=(a_i, b_i); a_i, b_i\in\mathbb{R}; a_i\leq b_i; i=1,2\}$. Take soft $sw$-open sets $G_E=\{(e_1, [0,1]), (e_2, [0,1])\}$ and $H_E=\{(e_1, [1,2]), (e_2, [1,2])\}$ over $\mathbb{R}$, then $G_E\bigsqcap H_E\ne\Phi_E$ but $\Int(G_E\bigsqcap H_E)=\Phi_E$.	
	\end{example}

	\begin{remark}
		The intersection of a soft $sw$-open set with another soft open, soft closed or soft dense set need not be a soft $sw$-open set, and counterexamples showing this are easy to find.
	\end{remark}
	
	The result below explains the conditions under which the intersection of soft $sw$-open and soft open sets is a soft $sw$-open set.
	\begin{proposition}\label{}
		The intersection of two soft $sw$-open sets in a soft hyperconnected space $(X, \T, E)$ is a soft $sw$-open set.
	\end{proposition}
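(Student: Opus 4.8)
The plan is to reduce to the case of two non-null soft $sw$-open sets and then exploit the witnessing soft open subsets guaranteed by Remark \ref{defn-sw}(a), using soft hyperconnectedness precisely to keep their intersection non-null.

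First I would dispose of the trivial case. If either of the given soft $sw$-open sets $G_E$, $H_E$ is null, then $G_E\bigsqcap H_E=\Phi_E$, which is soft $sw$-open by Definition \ref{defn1}, so there is nothing to prove. Hence I may assume that both $G_E$ and $H_E$ are non-null.

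Next, by Remark \ref{defn-sw}(a) I would produce soft open sets $U_E$ and $V_E$ with $\Phi_E\ne U_E\sqsubseteq G_E$ and $\Phi_E\ne V_E\sqsubseteq H_E$. Since a soft topology is closed under finite soft intersections, $U_E\bigsqcap V_E$ is again soft open, and clearly $U_E\bigsqcap V_E\sqsubseteq G_E\bigsqcap H_E$. The key step, and the only place where the hypothesis is actually used, is to verify that $U_E\bigsqcap V_E\ne\Phi_E$. This is exactly what soft hyperconnectedness supplies: by definition any two non-null soft open subsets of $(X,\T,E)$ intersect, so $U_E\bigsqcap V_E\ne\Phi_E$. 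Therefore $U_E\bigsqcap V_E$ is a non-null soft open set contained in $G_E\bigsqcap H_E$, and one further appeal to Remark \ref{defn-sw}(a) shows that $G_E\bigsqcap H_E$ is soft $sw$-open; equivalently $\Int(G_E\bigsqcap H_E)\sqsupseteq U_E\bigsqcap V_E\ne\Phi_E$.

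The argument is short, and I do not expect any real obstacle. The only substantive point is recognizing that hyperconnectedness is precisely the property preventing the two witnessing soft open sets from meeting in the null set; without it, Example \ref{intersection} already shows that the intersection of soft $sw$-open sets can fail to be soft $sw$-open.
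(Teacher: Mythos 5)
Your proof is correct and follows essentially the same route as the paper: both arguments dispose of the null case, take non-null soft open witnesses inside each set (the paper uses $\Int(G_E)$ and $\Int(H_E)$ directly), and invoke soft hyperconnectedness to conclude their intersection is non-null, hence $\Int(G_E\bigsqcap H_E)\ne\Phi_E$. If anything, your version is marginally more careful, since you only need the inclusion $\Int(G_E\bigsqcap H_E)\sqsupseteq U_E\bigsqcap V_E$ rather than the equality $\Int(G_E\bigsqcap H_E)=\Int(G_E)\bigsqcap\Int(H_E)$ asserted in the paper.
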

	\begin{proof}
		If one of the two soft $sw$-open sets is null, the proof is trivial. 
		Suppose $G_E$ and $H_E$ are two soft $sw$-open sets. Then $\Int(G_E)=U_E\neq\Phi_E$ and $\Int(H_E)=V_E\neq\Phi_E$. Now, $\Int(G_E\bigsqcap H_E)=\Int(G_E)\bigsqcap \Int(H_E)=U_E\bigsqcap V_E$. Since $(X, \T, E)$ is soft hyperconnected, $U_E\bigsqcap V_E\neq \Phi_E$. Thus $\Int(G_E\bigsqcap H_E)\neq \Phi_E$; hence, we obtain the desired result.
	\end{proof}
	
	\begin{corollary}
		The intersection of soft $sw$-open and soft open sets in a soft hyperconnected space $(X, \T, E)$ is a soft $sw$-open set.
	\end{corollary}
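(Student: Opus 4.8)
The plan is to deduce this corollary directly from the preceding proposition, the only new ingredient being the observation that every soft open set is, in particular, soft $sw$-open. First I would verify this inclusion: if $U_E$ is a soft open set, then either $U_E$ is null, in which case it is soft $sw$-open by Definition \ref{defn1}, or $U_E$ is non-null, and since a soft open set coincides with its own soft interior we have $\Int(U_E)=U_E\neq\Phi_E$, so $U_E$ is again soft $sw$-open. Hence the class of soft open sets sits inside the class of soft $sw$-open sets.

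With this inclusion established, the intersection of a soft $sw$-open set with a soft open set is just the intersection of two soft $sw$-open sets. I would then invoke the preceding proposition, which guarantees that in a soft hyperconnected space the intersection of any two soft $sw$-open sets is soft $sw$-open, and the conclusion follows at once.

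I do not expect any genuine obstacle here: the substantive work has already been done in the proposition, where soft hyperconnectedness is used to ensure that the soft interiors $U_E$ and $V_E$ meet in a non-null soft set. The corollary merely records the special case in which one of the two soft $sw$-open sets happens to be soft open. The single point that deserves explicit mention is the degenerate (null) case, and this is already accommodated by the definition of soft $sw$-open, so no separate argument is required.
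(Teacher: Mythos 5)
Your proposal is correct and matches the paper's intended argument: the corollary is an immediate consequence of the preceding proposition once one notes that every soft open set is soft $sw$-open (null sets by Definition \ref{defn1}, non-null ones since $\Int(U_E)=U_E\neq\Phi_E$). The paper leaves this observation implicit; you have simply made it explicit, including the degenerate null case.
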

	\begin{corollary}
		The family of soft $sw$-open subsets of a soft hyperconnected space $(X, \T, E)$ forms a soft topology.
	\end{corollary}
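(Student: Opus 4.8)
The plan is to verify directly the three defining axioms (c1)--(c3) of a soft topology for the family $\T^{sw}$ of all soft $sw$-open subsets of $(X,\T,E)$, exploiting the fact that the two closure properties we need have already been established: unconditionally for unions, and under the hyperconnectedness hypothesis for pairwise intersections.

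First I would dispose of axiom (c1). The null set $\Phi_E$ is soft $sw$-open by Definition \ref{defn1} since it is null, while $X_E$ is soft $sw$-open because $\Int(X_E)=X_E\neq\Phi_E$; hence both $\Phi_E$ and $X_E$ lie in $\T^{sw}$. Next, for axiom (c3) I would simply invoke Proposition \ref{union-sw}, which already guarantees that any union of soft $sw$-open sets is soft $sw$-open, and note that this step uses no connectivity hypothesis at all.

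The one place where hyperconnectedness genuinely enters is axiom (c2). Here I would appeal to the preceding Proposition, which asserts that in a soft hyperconnected space the intersection of two soft $sw$-open sets is again soft $sw$-open, and then promote this to arbitrary finite intersections by a routine induction on the number $n$ of sets. The base case $n=2$ is exactly that Proposition, and the inductive step writes $\bigsqcap_{i=1}^{n+1}G_E^i=\big(\bigsqcap_{i=1}^{n}G_E^i\big)\bigsqcap G_E^{n+1}$, where the first factor is soft $sw$-open by the inductive hypothesis, so the whole expression is soft $sw$-open by the binary case. With (c1), (c2) and (c3) in hand, $\T^{sw}$ satisfies the axioms of a soft topology.

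I do not expect any genuine obstacle, since the corollary is essentially an assembly of the two immediately preceding results, with hyperconnectedness supplying precisely the closure under finite intersection that Example \ref{intersection} shows can fail in its absence. The only point demanding a little care is the induction for (c2), and even that is routine once the binary case is granted.
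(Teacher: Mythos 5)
Your proof is correct and matches the paper's (implicit) argument: the paper states this corollary without proof precisely because it follows by assembling Proposition \ref{union-sw} for arbitrary unions, the preceding proposition on binary intersections in soft hyperconnected spaces (extended to finite intersections by the routine induction you give), and the trivial observation that $\Phi_E$ and $X_E$ are soft $sw$-open. Nothing is missing.
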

	
	\begin{lemma}\label{intersectionwithdense}
		Let $G_E, D_E$ be subsets of $(X, \T, E)$. If $G_E$ is $sw$-open and $D_E$ is soft dense over $X$, then $G_E\bigsqcap D_E$ is soft $sw$-open over $D$.
	\end{lemma}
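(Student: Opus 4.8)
My plan is to read the conclusion carefully first: ``soft $sw$-open over $D$'' means soft $sw$-open in the soft subspace $(D, \T_D, E)$, whose soft open sets are exactly those of the form $W_E\bigsqcap D_E$ with $W_E\in\T$. So the target is to exhibit, inside $G_E\bigsqcap D_E$, a non-null soft open set of this relative topology, after which Remark \ref{defn-sw}(a) (read in the subspace) finishes the job. I would begin by disposing of the trivial case: if $G_E=\Phi_E$, then $G_E\bigsqcap D_E=\Phi_E$ is null, hence soft $sw$-open over $D$ by Definition \ref{defn1}.

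For the substantive case I would assume $G_E$ is non-null. Since $G_E$ is soft $sw$-open, Remark \ref{defn-sw}(a) supplies a soft open set $U_E\in\T$ with $\Phi_E\ne U_E\sqsubseteq G_E$. The natural candidate for the witnessing relatively-open set is $U_E\bigsqcap D_E$: it is soft open in $(D, \T_D, E)$ by the very definition of the relative topology, and it obviously satisfies $U_E\bigsqcap D_E\sqsubseteq G_E\bigsqcap D_E$. Thus everything reduces to checking that $U_E\bigsqcap D_E$ is itself non-null.

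This last verification is the crux of the argument, and it is precisely where density of $D_E$ is used: a soft dense set must meet every non-null soft open set. I would prove it by contradiction. If $U_E\bigsqcap D_E=\Phi_E$, then $D_E\sqsubseteq U_E^c$, where $U_E^c$ is soft closed and, since $U_E\ne\Phi_E$, satisfies $U_E^c\ne X_E$. Taking soft closures gives $\Cl(D_E)\sqsubseteq U_E^c\ne X_E$, contradicting $\Cl(D_E)=X_E$. Hence $U_E\bigsqcap D_E\ne\Phi_E$.

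Combining these, $U_E\bigsqcap D_E$ is a non-null soft open subset of the subspace $(D, \T_D, E)$ contained in $G_E\bigsqcap D_E$, so by the subspace form of Remark \ref{defn-sw}(a) the set $G_E\bigsqcap D_E$ is soft $sw$-open over $D$, as claimed. The only genuinely delicate points are bookkeeping ones: being careful that ``open over $D$'' is interpreted in $\T_D$ rather than in $\T$, and invoking the density-meets-open fact that guarantees nonemptiness of the intersection; once those are pinned down, the remaining steps are routine.
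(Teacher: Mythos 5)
Your proof is correct and follows essentially the same route as the paper's: both arguments use a non-null soft open set inside $G_E$ (the paper uses $\Int(G_E)$ directly, you use the witness $U_E$ from Remark~\ref{defn-sw}(a)) and invoke soft density of $D_E$ to conclude that its trace on $D_E$ is a non-null relatively soft open set contained in $G_E\bigsqcap D_E$. Your write-up is merely more detailed, spelling out the null case and the density-meets-open fact that the paper's one-line proof leaves implicit.
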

	\begin{proof}
		Since $\Int_D(G_E\bigsqcap D_E)=\Int_D(G_E)\bigsqcap D_E\sqsupseteq \Int(G_E)\bigsqcap D_E\ne\Phi_E$ (as $D_E$ is soft dense), so $G_E\bigsqcap D_E$ is soft $sw$-open over $D$.
	\end{proof}
	
	\begin{lemma}\label{sw(Y)>>sw(X)}
		Let $(Y, \T_Y, E)$ be a soft open subspace of $(X, \T, E)$ and let $G_E\sqsubseteq Y_E$. Then $G_E$ is soft $sw$-open over $Y$ iff it is soft $sw$-open over $X$.
	\end{lemma}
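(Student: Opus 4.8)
The plan is to reduce everything to the neighbourhood-style characterisation in Remark \ref{defn-sw}(a), which says that a non-null set is soft $sw$-open exactly when it contains a non-null soft open set. First I would dispose of the trivial case: if $G_E=\Phi_E$, then it is soft $sw$-open over both $X$ and $Y$ directly from Definition \ref{defn1}, so I may assume $G_E$ is non-null throughout the rest of the argument.

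For the forward implication (soft $sw$-open over $X$ $\Rightarrow$ over $Y$), I would pick a soft open set $U_E\in\T$ with $\Phi_E\ne U_E\sqsubseteq G_E$. Since $G_E\sqsubseteq Y_E$, transitivity gives $U_E\sqsubseteq Y_E$, hence $U_E=U_E\bigsqcap Y_E$, which lies in $\T_Y$ by the definition of the relative topology. Thus $U_E$ is a non-null soft open subset of $G_E$ in the subspace, and Remark \ref{defn-sw}(a) yields that $G_E$ is soft $sw$-open over $Y$. Note that this direction does not even use the openness of $Y_E$.

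For the reverse implication (over $Y$ $\Rightarrow$ over $X$), I would take a set $V_E\in\T_Y$ with $\Phi_E\ne V_E\sqsubseteq G_E$, and by the definition of $\T_Y$ write $V_E=W_E\bigsqcap Y_E$ for some $W_E\in\T$. The crux sits here: because $(Y,\T_Y,E)$ is a soft \emph{open} subspace, $Y_E$ is itself soft open in $X$, and finite intersections of soft open sets are soft open, so $V_E=W_E\bigsqcap Y_E\in\T$. Hence $V_E$ is a non-null genuinely soft open subset of $G_E$ over $X$, and again Remark \ref{defn-sw}(a) shows that $G_E$ is soft $sw$-open over $X$.

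The only real obstacle is this reverse direction, and it is precisely where the hypothesis that the subspace is open becomes indispensable: in an arbitrary subspace a relatively open set need not be soft open in the ambient space, so the witness $V_E$ might fail to certify soft $sw$-openness over $X$. The openness of $Y_E$ is exactly what promotes a relatively open witness back to an ambient open witness, thereby closing the equivalence.
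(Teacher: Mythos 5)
Your proof is correct and takes essentially the same route as the paper: your reverse direction is the paper's argument made explicit (writing $V_E=W_E\bigsqcap Y_E\in\T$ since $Y_E\in\T$), and your forward direction, which promotes the ambient witness via $U_E\sqsubseteq G_E\sqsubseteq Y_E$ so that $U_E=U_E\bigsqcap Y_E\in\T_Y$, is just a self-contained substitute for the paper's citation of the monotonicity $\Int_X(G_E)\sqsubseteq\Int_Y(G_E)$ from Shabir--Naz. Your explicit treatment of the null case and your observation that openness of $Y_E$ is used only in the reverse direction are harmless refinements of the same argument.
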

	\begin{proof}
		Assume $G_E$ is soft $sw$-open over $Y$. There exists a soft open set $U_E$ over $Y$ such that $\Phi_E\ne U_E\sqsubseteq G_E$. Since $Y_E$ is soft open over $X$, so $U_E$ is soft open over $X$. Hence $G_E$ is soft $sw$-open over $X$.
		
		Conversely, assume $G_E$ is soft $sw$-open over $X$. That is $\Int_X(G_E)\ne\Phi_E$. By Theorem 2 in \cite{shabir2011onsoft}, $\Int_X(G_E)\sqsubseteq\Int_Y(G_E)$, therefore $G_E$ is soft $sw$-open over $Y$.
	\end{proof}

	The following example shows that the above result is not true if $Y_E$ is soft dense in $\widetilde{X}$.
	\begin{example}
		Let $X=\{w,x,y,z\}$ and $E=\{e_1,e_2\}$. Set $\T=\{\Phi_E, F_E, G_E, H_E, X_E\}$, where
		\begin{eqnarray*}
			F_E&=&\{(e_1,\{x,z\}), (e_2, \{w,x\})\}\\
			G_E&=&\{(e_1,X), (e_2, \{y,z\})\}\\
			H_E&=&\{(e_1,\{x,z\}), (e_2, \emptyset)\}.
		\end{eqnarray*}
		Take $Y=\{x,y\}$, so $\T_Y=\{\Phi_E, I_E, J_E, K_E, Y_E\}$, where
		\begin{eqnarray*}
			I_E&=&\{(e_1,\{x\}), (e_2, \{x\})\}\\
			J_E&=&\{(e_1,Y), (e_2, \{y\})\}\\
			K_E&=&\{(e_1,\{x\}), (e_2, \emptyset)\}\\
			Y_E&=&\{(e_1,\{x,y\}), (e_2,\{x,y\})\}.
		\end{eqnarray*}
		The set $I_E$ is soft $sw$-open over the soft dense set $Y$ but not soft $sw$-open over $X$.
	\end{example}

	\begin{lemma}\label{cl=clint semi}
		Let $G_E$ be a subset of $(X, \T, E)$. Then $G_E$ is soft semiopen iff $\Cl(G_E)=\Cl(\Int(G_E))$.
	\end{lemma}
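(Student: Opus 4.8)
The plan is to prove the biconditional by unwinding the definition of soft semiopenness, namely that $G_E$ is soft semiopen precisely when $G_E\sqsubseteq\Cl(\Int(G_E))$, and then leaning on three elementary properties of the soft interior and soft closure operators: monotonicity (if $A_E\sqsubseteq B_E$ then $\Cl(A_E)\sqsubseteq\Cl(B_E)$), idempotence of closure ($\Cl(\Cl(A_E))=\Cl(A_E)$), and the basic containments $\Int(G_E)\sqsubseteq G_E\sqsubseteq\Cl(G_E)$. All three are standard in soft topological spaces and may be invoked without further ado.

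For the forward direction, I would assume $G_E$ is soft semiopen, so $G_E\sqsubseteq\Cl(\Int(G_E))$. Applying $\Cl$ to both sides and using idempotence gives $\Cl(G_E)\sqsubseteq\Cl(\Cl(\Int(G_E)))=\Cl(\Int(G_E))$. For the reverse containment, I would start from $\Int(G_E)\sqsubseteq G_E$ and apply monotonicity of $\Cl$ to obtain $\Cl(\Int(G_E))\sqsubseteq\Cl(G_E)$. Combining the two containments yields the equality $\Cl(G_E)=\Cl(\Int(G_E))$.

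The converse is even shorter: assuming $\Cl(G_E)=\Cl(\Int(G_E))$, I would simply chain the inclusion $G_E\sqsubseteq\Cl(G_E)$ with the hypothesis to get $G_E\sqsubseteq\Cl(G_E)=\Cl(\Int(G_E))$, which is exactly the defining condition for $G_E$ to be soft semiopen.

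I do not anticipate a genuine obstacle here, since the argument is a purely formal manipulation of the closure–interior operators. The only point meriting a moment's care is the reverse containment in the forward direction, where one must remember to apply $\Cl$ to the trivial inclusion $\Int(G_E)\sqsubseteq G_E$ rather than attempting to compare $\Cl(\Int(G_E))$ with $\Cl(G_E)$ directly; but this is immediate from monotonicity. Thus the whole proof reduces to two short inclusion chains, one for each implication.
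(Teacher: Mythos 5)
Your proof is correct and follows essentially the same route as the paper's: taking the soft closure of the defining inclusion $G_E\sqsubseteq\Cl(\Int(G_E))$ (using monotonicity and idempotence of $\Cl$) for one inclusion, closing $\Int(G_E)\sqsubseteq G_E$ for the other, and chaining $G_E\sqsubseteq\Cl(G_E)=\Cl(\Int(G_E))$ for the converse. No gaps; your explicit mention of idempotence of $\Cl$ merely makes visible a step the paper leaves implicit.
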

	\begin{proof}
		If $G_E$ is soft semiopen, then $G_E\sqsubseteq\Cl(\Int(G_E))$ and so $\Cl(G_E)\sqsubseteq\Cl(\Int(G_E))$. For other side of inclusion, we always have $\Int(G_E)\sqsubseteq G_E$. Therefore $\Cl(\Int(G_E))\sqsubseteq \Cl(G_E)$. Thus $\Cl(G_E)=\Cl(\Int(G_E))$.
		
		Conversely, assume that $\Cl(G_E)=\Cl(\Int(G_E))$, but $G_E\sqsubseteq\Cl(G_E)$ always, so $G_E\sqsubseteq\Cl(\Int(G_E))$. Hence $G_E$ is soft semiopen.
	\end{proof}

	\begin{lemma}\label{semi=int not 0}
		Let $G_E$ be a non-null subset of $(X, \T, E)$. If $G_E$ is soft semiopen, then $\Int(G_E)\neq\Phi_E$.
	\end{lemma}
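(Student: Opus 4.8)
The plan is to proceed by contraposition: rather than assuming $G_E$ is soft semiopen and deriving $\Int(G_E)\ne\Phi_E$, I would assume $\Int(G_E)=\Phi_E$ and show that this forces $G_E$ to be null, which is exactly the contrapositive of the claim for a non-null set. The whole argument hinges on pushing the hypothesis $\Int(G_E)=\Phi_E$ through the soft-semiopenness inclusion $G_E\sqsubseteq\Cl(\Int(G_E))$.

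First I would record the key auxiliary fact that $\Cl(\Phi_E)=\Phi_E$. This holds because $\Phi_E$ is soft closed: by axiom (c1) we have $X_E\in\T$, and since $\Phi_E=X_E^c$, the null soft set is the complement of a soft open set, hence soft closed. As the soft closure of a soft set is the smallest soft closed set containing it, the closure of a soft closed set coincides with the set itself, giving $\Cl(\Phi_E)=\Phi_E$.

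Next I would substitute the assumption $\Int(G_E)=\Phi_E$ into the closure, obtaining $\Cl(\Int(G_E))=\Cl(\Phi_E)=\Phi_E$. Invoking the definition of soft semiopen, namely $G_E\sqsubseteq\Cl(\Int(G_E))$, this yields $G_E\sqsubseteq\Phi_E$, and since $\Phi_E$ is the smallest soft set, we conclude $G_E=\Phi_E$; that is, $G_E$ is null. This contradicts the hypothesis that $G_E$ is non-null, so $\Int(G_E)\ne\Phi_E$ must hold.

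I do not expect any genuine obstacle here: the statement is essentially a one-line consequence of unwinding the definition, and the only point requiring a moment's care is the justification of $\Cl(\Phi_E)=\Phi_E$, which reduces to observing that $\Phi_E$ is soft closed via axiom (c1). The remaining steps are purely formal manipulations of the inclusion and therefore routine.
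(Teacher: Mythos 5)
Your proof is correct and takes essentially the same route as the paper: both argue by contradiction from the assumption $\Int(G_E)=\Phi_E$ that $\Cl(\Int(G_E))=\Phi_E$ and hence $G_E=\Phi_E$, contradicting non-nullness. The only cosmetic difference is that the paper passes through Lemma~\ref{cl=clint semi} (for semiopen sets, $\Cl(G_E)=\Cl(\Int(G_E))$), whereas you unwind the definition $G_E\sqsubseteq\Cl(\Int(G_E))$ directly together with the observation $\Cl(\Phi_E)=\Phi_E$, which is, if anything, slightly more self-contained.
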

	\begin{proof}
		Suppose otherwise that if $G_E$ is a non-null soft semiopen set such that $\Int(G_E)=\Phi_E$, by Lemma \ref{cl=clint semi}, $\Cl(G_E)=\Phi_E$ which implies that $G_E=\Phi_E$. Contradiction!
	\end{proof}
	
	\begin{remark}\label{w-relation sets}
		Since $\Int(G_E)\sqsubseteq\Int(\Cl(G_E))$ for each soft set $G_E$ in a soft topological space $(X, \T, E)$, so each soft $sw$-open set is soft somewhere dense.
	\end{remark}

	Next, we put Remark~\ref{w-relation sets}, Lemma~\ref{semi=int not 0} and Proposition 2.8 in  \cite{shami2018somewhere} into the following diagram:
	
	\begin{figure}[H]
		\centering
		\begin{tikzcd}[arrows=rightarrow]
			&\text{soft semiopen set}\arrow[dd]\arrow[rr]&& \text{soft $\beta$-open set}\arrow[dd]\\
			&&&\\
			&\text{soft $sw$-open set}\arrow[rr]&&\text{soft somewhere dense set}
		\end{tikzcd}
		\caption*{\quad Diagram I: Relationship between some generalizations of soft open sets}\label{D1}
	\end{figure}
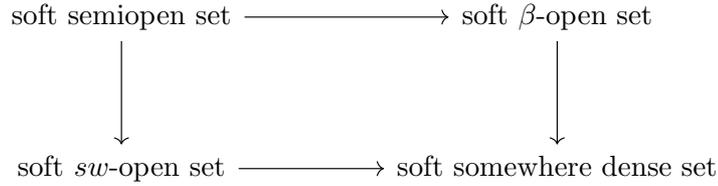
	
	In general, none of these implications can be replaced by equivalence as shown below:
	\begin{example}
		Consider the soft topology defined in Example \ref{intersection}. The soft set of rational numbers $\mathbb{Q}_E$ over $\mathbb{R}$ is soft $\beta$-open (consequently, is soft somewhere dense) but not soft $sw$-open (consequently, is not soft semi-open). On the other hand, the set
		$\{(e_1, (0,1)),(e_2, \{2\})\}$ is clearly soft $sw$-open but not soft semiopen. The soft set $F_E$ given in Example 2.9 in \cite{shami2018somewhere} is soft somewhere dense but not soft $\beta$-open.
	\end{example}
	
	\begin{lemma}\label{closure}\cite[Lemma 2.24]{shami2018somewhere}
		Let $G_E$ be a subset of $(X, \T, E)$. Then $\Cl(G_E)\bigsqcap U_E\sqsubseteq\Cl(G_E\bigsqcap U_E)$ for each soft open set $U_E$ over $X$.
	\end{lemma}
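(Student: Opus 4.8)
The plan is to establish the soft analogue of the classical topological fact that $\overline{A}\cap U\subseteq\overline{A\cap U}$ whenever $U$ is open. I would prefer a purely algebraic argument that avoids soft points and instead manipulates the soft closure operator directly, relying only on its monotonicity, its distributivity over finite soft unions, and the fact (recorded in the Lemma following the definition of $\Int$ and $\Cl$) that the complement of a soft open set is soft closed.

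First I would observe the decomposition
\begin{equation*}
G_E=(G_E\bigsqcap U_E)\bigsqcup(G_E\bigsqcap U_E^c)\sqsubseteq(G_E\bigsqcap U_E)\bigsqcup U_E^c,
\end{equation*}
which holds for any soft sets because $U_E\bigsqcup U_E^c=X_E$ and $G_E\bigsqcap U_E^c\sqsubseteq U_E^c$. Applying the soft closure operator and using its monotonicity together with its additivity over finite unions gives
\begin{equation*}
\Cl(G_E)\sqsubseteq\Cl(G_E\bigsqcap U_E)\bigsqcup\Cl(U_E^c).
\end{equation*}
Since $U_E$ is soft open, $U_E^c$ is soft closed, hence $\Cl(U_E^c)=U_E^c$, and the right-hand side simplifies to $\Cl(G_E\bigsqcap U_E)\bigsqcup U_E^c$.

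Next I would intersect both sides with $U_E$ and distribute the soft intersection over the soft union:
\begin{equation*}
\Cl(G_E)\bigsqcap U_E\sqsubseteq\bigl(\Cl(G_E\bigsqcap U_E)\bigsqcap U_E\bigr)\bigsqcup\bigl(U_E^c\bigsqcap U_E\bigr).
\end{equation*}
The second term vanishes because $U_E^c\bigsqcap U_E=\Phi_E$, while the first term is contained in $\Cl(G_E\bigsqcap U_E)$ since intersecting with $U_E$ only shrinks it. This yields the desired inclusion.

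The argument is routine; the only point requiring care — and the step I would verify first — is that the soft closure operator genuinely enjoys the two Kuratowski-type properties I invoke, namely monotonicity and finite additivity $\Cl(A\bigsqcup B)=\Cl(A)\bigsqcup\Cl(B)$. Both follow directly from the definition of $\Cl$ as the smallest soft closed superset together with the soft topology axioms, but if one wished to sidestep verifying finite additivity one could instead give a soft-point proof: for $P^x_e\in\Cl(G_E)\bigsqcap U_E$ and any soft open $W_E$ containing $P^x_e$, the set $W_E\bigsqcap U_E$ is again a soft open neighbourhood of $P^x_e$ and so must meet $G_E$, whence $W_E$ meets $G_E\bigsqcap U_E$; as $W_E$ was arbitrary, $P^x_e\in\Cl(G_E\bigsqcap U_E)$.
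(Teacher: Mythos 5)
Your proof is correct, but note that there is no in-paper proof to compare it against: the paper imports this lemma verbatim from \cite[Lemma 2.24]{shami2018somewhere} and offers no argument of its own, so your write-up actually supplies what the paper leaves to the literature. Your algebraic route is sound, and every step survives the passage to soft sets: the decomposition $G_E=(G_E\bigsqcap U_E)\bigsqcup(G_E\bigsqcap U_E^c)$, the distributivity of $\bigsqcap$ over $\bigsqcup$, and the disjointness $U_E\bigsqcap U_E^c=\Phi_E$ all hold parameterwise, since soft union, intersection and complement are defined via the values $F(e)$ for each $e\in E$; monotonicity of $\Cl$ is immediate from its definition as the smallest soft closed superset; and the additivity you single out for verification is needed only in the subadditive direction $\Cl(A_E\bigsqcup B_E)\sqsubseteq\Cl(A_E)\bigsqcup\Cl(B_E)$, which follows because a union of two soft closed sets is soft closed (its complement is a finite intersection of members of $\T$, hence in $\T$), so the verification is even lighter than you suggest. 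Your fallback soft-point argument is also valid and is closer in spirit to the proofs one typically finds in the soft-topology literature, but it silently invokes the neighbourhood characterization of soft closure --- $P^x_e\in\Cl(G_E)$ iff every soft open set containing $P^x_e$ meets $G_E$ --- which itself requires a short proof and some care with the belong/nonbelong relations for soft points mentioned in the paper's introduction; avoiding that machinery is precisely what your purely operator-theoretic argument buys you.
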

	
	\begin{lemma}\label{open X semi}
		Let $G_E, H_E$ be subsets of $(X, \T, E)$. If $G_E$ is soft open and $H_E$ is soft semiopen, then $G_E\bigsqcap H_E$ is soft semiopen over $X$.
	\end{lemma}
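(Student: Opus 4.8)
The plan is to chain two inclusions and then recognize an interior identity, using Lemma~\ref{closure} as the crucial bridge. First I would unpack the two hypotheses into their operator form: soft openness of $G_E$ gives $G_E=\Int(G_E)$, and soft semiopenness of $H_E$ gives $H_E\sqsubseteq\Cl(\Int(H_E))$. Intersecting the latter inclusion with $G_E$ (soft intersection is monotone) yields
\begin{eqnarray*}
G_E\bigsqcap H_E&\sqsubseteq&G_E\bigsqcap\Cl(\Int(H_E)).
\end{eqnarray*}

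The next step is to commute $G_E$ past the closure operator. Since $G_E$ is soft open, Lemma~\ref{closure} applies with the soft set $\Int(H_E)$ and the soft open set $G_E$, giving $\Cl(\Int(H_E))\bigsqcap G_E\sqsubseteq\Cl(\Int(H_E)\bigsqcap G_E)$. Combining this with the previous display produces
\begin{eqnarray*}
G_E\bigsqcap H_E&\sqsubseteq&\Cl\big(G_E\bigsqcap\Int(H_E)\big).
\end{eqnarray*}

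Finally I would identify the argument of the closure with $\Int(G_E\bigsqcap H_E)$. Because $G_E$ is soft open, $G_E=\Int(G_E)$, and the soft interior distributes over finite soft intersections, so $G_E\bigsqcap\Int(H_E)=\Int(G_E)\bigsqcap\Int(H_E)=\Int(G_E\bigsqcap H_E)$. Substituting this identity shows $G_E\bigsqcap H_E\sqsubseteq\Cl(\Int(G_E\bigsqcap H_E))$, which is exactly the definition of $G_E\bigsqcap H_E$ being soft semiopen over $X$.

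I expect the only genuinely nontrivial move to be the middle step, where Lemma~\ref{closure} lets me pull the soft open set $G_E$ inside the soft closure; the openness hypothesis on $G_E$ is used twice (once to invoke that lemma and once to rewrite $G_E\bigsqcap\Int(H_E)$ as a single interior), so the essential point is that soft semiopenness is preserved under intersection with a soft open set precisely because soft open sets interact well with both $\Int$ and $\Cl$. I would also note that the distributivity $\Int(G_E\bigsqcap H_E)=\Int(G_E)\bigsqcap\Int(H_E)$ holds here only because one factor is soft open, so I would flag that rather than assert it for arbitrary soft sets.
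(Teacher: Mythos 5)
Your proof is correct and takes essentially the same route as the paper's: where you start from the definition $H_E\sqsubseteq\Cl(\Int(H_E))$, the paper starts from Chen's sandwich characterization (a soft open set $U_E$ with $U_E\sqsubseteq H_E\sqsubseteq\Cl(U_E)$, for which one may simply take $U_E=\Int(H_E)$), and both arguments then perform the identical key step of pulling the soft open set $G_E$ inside the closure via Lemma~\ref{closure}. One small correction to your closing caveat: the identity $\Int(F_E\bigsqcap K_E)=\Int(F_E)\bigsqcap\Int(K_E)$ holds for arbitrary soft sets $F_E, K_E$ (the soft set $\Int(F_E)\bigsqcap\Int(K_E)$ is soft open, being a finite intersection of soft open sets, and is contained in $F_E\bigsqcap K_E$, while conversely $\Int(F_E\bigsqcap K_E)$ is a soft open subset of each factor), so openness of $G_E$ is not needed for that final rewriting --- it is used only to invoke Lemma~\ref{closure} and to write $G_E=\Int(G_E)$.
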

	\begin{proof}
		Assume $H_E$ is soft semiopen and $G_E$ is soft open. By Theorem 3.1 in \cite{chen2013semi}, there exists a soft open set $U_E$ over $X$ such that $U_E\sqsubseteq H_E\sqsubseteq\Cl(U_E)$. Now $U_E\bigsqcap G_E\sqsubseteq H_E\bigsqcap G_E\sqsubseteq\Cl(U_E)\bigsqcap G_E$. By Lemma \ref{closure}, $U_E\bigsqcap G_E\sqsubseteq H_E\bigsqcap G_E\sqsubseteq\Cl(U_E\bigsqcap G_E)$ and since $U_E\bigsqcap G_E$ is soft open, therefore by Theorem 3.1 in \cite{chen2013semi},  $H_E\bigsqcap G_E$ is soft semiopen over $X$.
	\end{proof}
	
	\begin{lemma}\label{open X semi in open}
		Let $G_E, H_E$ be subsets of $(X, \T, E)$. If $G_E$ is soft open and $H_E$ is soft semiopen, then $G_E\bigsqcap H_E$ is soft semiopen over $G$.
	\end{lemma}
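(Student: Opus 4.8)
The plan is to leverage Lemma~\ref{open X semi}, which already yields that $G_E\bigsqcap H_E$ is soft semiopen over $X$, and then to transfer this property to the soft open subspace $(G,\T_G,E)$. First I would invoke Theorem 3.1 in \cite{chen2013semi}: since $H_E$ is soft semiopen, there is a soft open set $U_E$ over $X$ with $U_E\sqsubseteq H_E\sqsubseteq\Cl_X(U_E)$. Setting $W_E:=U_E\bigsqcap G_E$, the computation already carried out in the proof of Lemma~\ref{open X semi} (which rests on Lemma~\ref{closure}) gives
\[
W_E\sqsubseteq H_E\bigsqcap G_E\sqsubseteq\Cl_X(W_E),
\]
where $W_E$ is soft open over $X$ and $W_E\sqsubseteq G_E$.

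Next I would descend to the subspace. Because $G_E$ is soft open over $X$ and $W_E\sqsubseteq G_E$, we have $W_E=W_E\bigsqcap G_E\in\T_G$, so $W_E$ is soft open over $G$. The decisive step is the relative-closure identity $\Cl_G(W_E)=\Cl_X(W_E)\bigsqcap G_E$, valid for $W_E\sqsubseteq G_E$; combining it with the inclusion $H_E\bigsqcap G_E\sqsubseteq\Cl_X(W_E)$ obtained above and with the trivial inclusion $H_E\bigsqcap G_E\sqsubseteq G_E$ yields
\[
H_E\bigsqcap G_E\sqsubseteq\Cl_X(W_E)\bigsqcap G_E=\Cl_G(W_E).
\]
Thus $W_E\sqsubseteq H_E\bigsqcap G_E\sqsubseteq\Cl_G(W_E)$ with $W_E$ soft open over $G$, and a final appeal to the characterization of Theorem 3.1 in \cite{chen2013semi} (now applied inside $(G,\T_G,E)$) establishes that $G_E\bigsqcap H_E$ is soft semiopen over $G$.

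The part requiring most care is the justification of the subspace closure identity $\Cl_G(W_E)=\Cl_X(W_E)\bigsqcap G_E$, which is the standard relative-closure formula for soft subspaces; I expect this to be the main—though essentially routine—obstacle, and it is the only ingredient beyond the already-proved Lemma~\ref{open X semi} and the semiopen characterization of \cite{chen2013semi}. Everything else, namely the membership $W_E\in\T_G$ and the chain of inclusions, is immediate from the definitions of the soft relative topology and of soft intersection. (The degenerate case $G_E=\Phi_E$ is vacuous, since then no subspace over $G$ is formed.)
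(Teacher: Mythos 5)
Your proposal is correct and follows essentially the same route as the paper, whose proof is the one-line instruction to repeat the steps of Lemma~\ref{open X semi} and invoke the relative-closure identity $\Cl(U_E)\bigsqcap G_E=\Cl_{G_E}(U_E)$. Your only refinement is to apply that identity carefully to $W_E=U_E\bigsqcap G_E$ (for which $W_E\sqsubseteq G_E$ makes the formula cleanly valid) rather than to $U_E$ itself, which is exactly the intended reading of the paper's hint.
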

	\begin{proof}
		Apply the same steps in the proof of above lemma and use the statement that $\Cl(U_E)\bigsqcap G_E=\Cl_{G_E}(U_E)$.
	\end{proof}
	
	\begin{lemma}\label{semi = sw X open}
		A subset $G_E$ of $(X, \T, E)$ is soft semiopen iff $G_E\bigsqcap U_E$ is soft $sw$-open for each soft open set $U_E$ over $X$.
	\end{lemma}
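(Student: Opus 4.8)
The plan is to treat the two implications separately, drawing on the preceding lemmas for the forward direction and making an explicit choice of test set for the converse.

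For the forward implication, suppose $G_E$ is soft semiopen and let $U_E$ be an arbitrary soft open set over $X$. Since soft open sets are in particular soft semiopen, I would invoke Lemma~\ref{open X semi} with $U_E$ in the role of the soft open set and $G_E$ in the role of the soft semiopen set, obtaining that $G_E\bigsqcap U_E$ is soft semiopen over $X$. If $G_E\bigsqcap U_E=\Phi_E$, it is soft $sw$-open by Definition~\ref{defn1}; otherwise Lemma~\ref{semi=int not 0} yields $\Int(G_E\bigsqcap U_E)\neq\Phi_E$, so $G_E\bigsqcap U_E$ is soft $sw$-open. In either case the desired conclusion holds.

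For the converse, assume $G_E\bigsqcap U_E$ is soft $sw$-open for every soft open $U_E$, and aim to show $G_E\sqsubseteq\Cl(\Int(G_E))$, which is soft semiopenness. The decisive step is to test the hypothesis against the one particular soft open set $U_E=(\Cl(\Int(G_E)))^c$, which is soft open because $\Cl(\Int(G_E))$ is soft closed. Arguing by contradiction, if $G_E$ were not soft semiopen then $G_E\not\sqsubseteq\Cl(\Int(G_E))$, whence $G_E\bigsqcap U_E\neq\Phi_E$. By hypothesis this set is soft $sw$-open, so $\Int(G_E\bigsqcap U_E)\neq\Phi_E$. On the other hand, by monotonicity of the soft interior, $\Int(G_E\bigsqcap U_E)\sqsubseteq\Int(G_E)\bigsqcap U_E\sqsubseteq\Cl(\Int(G_E))\bigsqcap(\Cl(\Int(G_E)))^c=\Phi_E$, a contradiction. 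Hence $G_E$ is soft semiopen.

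The forward direction is routine once the two cited lemmas are in place. The substantive point lies in the converse: the right idea is to recognise that $(\Cl(\Int(G_E)))^c$ is precisely the soft open set that isolates the portion of $G_E$ responsible for any failure of semiopenness, so that applying the $sw$-openness hypothesis there forces a non-null interior which must simultaneously lie inside both $\Cl(\Int(G_E))$ and its complement. The steps I would check carefully are the monotonicity inclusion $\Int(G_E\bigsqcap U_E)\sqsubseteq\Int(G_E)\bigsqcap U_E$ and the disjointness identity $\Cl(\Int(G_E))\bigsqcap(\Cl(\Int(G_E)))^c=\Phi_E$, both of which are standard consequences of the soft interior and complement operations.
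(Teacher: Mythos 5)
Your proof is correct, and while your forward direction coincides with the paper's (both reduce to Lemma~\ref{open X semi} plus the fact that non-null soft semiopen sets have non-null interior, Lemma~\ref{semi=int not 0}), your converse takes a genuinely different route. The paper argues pointwise: it fixes $P^x_e\in G_E$, applies the hypothesis to soft open sets $U_E$ containing $P^x_e$, uses $\Int(G_E\bigsqcap U_E)=\Int(G_E)\bigsqcap U_E\neq\Phi_E$ to conclude that every such $U_E$ meets $\Int(G_E)$, hence $P^x_e\in\Cl(\Int(G_E))$, and so $G_E\sqsubseteq\Cl(\Int(G_E))$; this quantifies over all open neighbourhoods of each soft point and implicitly relies on the soft-point characterization of soft closure. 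You instead test the hypothesis against the single soft open set $U_E=(\Cl(\Int(G_E)))^c$ and derive a contradiction from $\Phi_E\neq\Int(G_E\bigsqcap U_E)\sqsubseteq\Int(G_E)\bigsqcap U_E\sqsubseteq\Cl(\Int(G_E))\bigsqcap(\Cl(\Int(G_E)))^c=\Phi_E$. Your version buys two things: it avoids the neighbourhood characterization of $\Cl$ (which in the soft setting requires some care with belong relations), and it is cleaner on a point the paper glosses over --- the hypothesis only gives $\Int(G_E\bigsqcap U_E)\neq\Phi_E$ when $G_E\bigsqcap U_E$ is non-null, which you verify explicitly from $G_E\not\sqsubseteq\Cl(\Int(G_E))$, whereas the paper asserts it without restricting $U_E$ to neighbourhoods of $P^x_e$. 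The paper's approach, in turn, is constructive at the level of soft points and makes the geometric content (every neighbourhood of a point of $G_E$ meets $\Int(G_E)$) more visible. Both arguments are sound; yours is the tighter one as written.
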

	\begin{proof}
		Since each soft semiopen set is soft $sw$-open and by Lemma \ref{open X semi}, the intersection of a soft semiopen set with a soft open set is semiopen, so the first part follows.
		
		Conversely, let $P^x_e\in G_E$ and assume that $G_E\bigsqcap U_E$ is soft $sw$-open for each soft open set $U_E$ over $X$. That is $\Int(G_E\bigsqcap U_E)\neq\Phi_E$. But  $\Phi_E\neq\Int(G_E\bigsqcap U_E)=\Int(G_E)\bigsqcap \Int(U_E)=\Int(G_E)\bigsqcap U_E$, which implies that $P^x_e\in \Cl(\Int(G_E))$ and so $G_E\sqsubseteq\Cl(\Int(G_E))$. This proves that $G_E$ is soft semiopen.
	\end{proof}
	
	\begin{lemma}
		Let $F_E$ be a subset of $(X, \T, E)$. If $F_E$ is soft semiclosed and soft somewhere dense, it is soft $sw$-open.
	\end{lemma}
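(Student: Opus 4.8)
The plan is to unwind the definition of soft semiclosed through the interior--closure duality and thereby exhibit a non-null soft open set sitting inside $F_E$, which is exactly what Remark~\ref{defn-sw}(a) needs. I would first dispose of the trivial case: if $F_E=\Phi_E$ it is null, hence soft $sw$-open by Definition~\ref{defn1}, so I may assume $F_E$ is non-null. For a non-null set the somewhere-dense hypothesis is the genuine condition $\Int(\Cl(F_E))\neq\Phi_E$ (the forced convention only enlarges the class by $\Phi_E$, which we have already handled).

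Next I would translate the hypothesis ``$F_E$ soft semiclosed'' into a statement about $\Int(\Cl(F_E))$. Since $F_E$ is soft semiclosed, its complement $F_E^c$ is soft semiopen, so $F_E^c\sqsubseteq\Cl(\Int(F_E^c))$. Invoking the duality identities $\Int(F_E^c)=(\Cl(F_E))^c$ and $\Cl\bigl((\Cl(F_E))^c\bigr)=(\Int(\Cl(F_E)))^c$ from the earlier lemma, this inclusion becomes $F_E^c\sqsubseteq(\Int(\Cl(F_E)))^c$, and complementing both sides yields
\[
\Int(\Cl(F_E))\sqsubseteq F_E.
\]

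Finally I would combine the two facts. The set $\Int(\Cl(F_E))$ is soft open (it is an interior), it is non-null by the somewhere-dense assumption, and it is contained in $F_E$ by the displayed inclusion; hence Remark~\ref{defn-sw}(a) gives that $F_E$ is soft $sw$-open. Equivalently, since $\Int(\Cl(F_E))$ is soft open and contained in $F_E$, it lies inside $\Int(F_E)$, so $\Int(F_E)\neq\Phi_E$, matching Definition~\ref{defn1} directly. The only step that demands care is the complementation in the second paragraph---correctly converting ``soft semiclosed'' into $\Int(\Cl(F_E))\sqsubseteq F_E$ via the duality identities; once that inclusion is in hand, the somewhere-dense hypothesis closes the argument in a single line.
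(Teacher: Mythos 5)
Your proof is correct and takes essentially the same route as the paper: both reduce soft semiclosedness to a statement comparing $\Int(\Cl(F_E))$ with $\Int(F_E)$ and then let the somewhere-dense hypothesis $\Int(\Cl(F_E))\neq\Phi_E$ finish in one line. The only cosmetic difference is that the paper invokes Lemma~\ref{cl=clint semi} (dualized to: $F_E$ is soft semiclosed iff $\Int(\Cl(F_E))=\Int(F_E)$), whereas you re-derive the equivalent inclusion $\Int(\Cl(F_E))\sqsubseteq F_E$ directly from the definition of soft semiopen and the complement duality identities, with welcome extra care about the $\Phi_E$ convention.
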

	\begin{proof}
		Directly follows from Lemma \ref{cl=clint semi} which implies that $F_E$ is semiclosed iff $\Int(\Cl(F_E))=\Int(F_E)$.
	\end{proof}

	\section{Soft Somewhat Continuous Functions}\label{se4}
	We devote this section to presenting the concepts of soft somewhat continuous functions (briefly soft $sw$-continuous) and giving several characterizations of it. In addition, we illustrate its relationships with some types of soft continuity. Finally, we derive some results related to soft separable and hyperconnected spaces.
	
	\begin{definition}
		Let $(X, \T, E)$ and $(Y, \S, E')$ be soft topological spaces. A function \sfxy is said to be soft $sw$-continuous if the inverse image of each soft open set over $Y$ is soft $sw$-open over $X$.
	\end{definition}
	
	The above definition can be stated as:
	\begin{remark}
		A function \sfxy is soft $sw$-continuous if for each $P^x_e\in X$ and each soft open set $V_{E'}$ over $Y$ containing $f(P^x_e)$, there exists a soft  $sw$-open set $U_E$ over $X$ containing $P^x_e$ such that $f(U_E)\sqsubseteq V_{E'}$.
	\end{remark}
	
	From Diagram I, we conclude that
	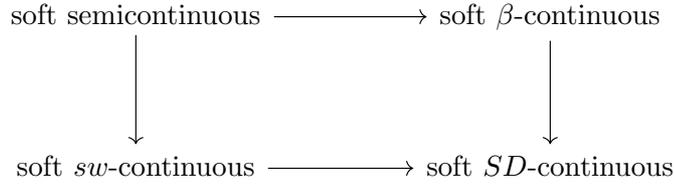
\begin{figure}[H]
		\centering
		\begin{tikzcd}[arrows=rightarrow]
			&\text{soft semicontinuous}\arrow[dd]\arrow[rr]&& \text{soft $\beta$-continuous}\arrow[dd]\\
			&&&\\
			&\text{soft $sw$-continuous}\arrow[rr]&&\text{soft $SD$-continuous}
		\end{tikzcd}
		\caption*{\quad Diagram II: Relationship between some generalizations of soft continuity}\label{D2}
	\end{figure}
	
	None of the implications in the above diagram is reversible.
	\begin{example}\label{sw-not semicont}
		Let $X=\{x,y,z\}$ and $E=\{e_1,e_2\}$. Put $\T=\{\Phi_E, F_E, G_E, X_E\}$, where $F_E=\{(e_1,\{y\})$, $(e_2,\{y\})\}$, $G_E=\{(e_1,\{x,z\}), (e_2,\{x,z\})\}\}$ and $\S=\{\Phi_E, H_E, X_E\}$, where $H_E=\{(e_1,X), (e_2,\{x,y\})\}$. Let $f:(X, \T, E)\to (X, \S, E)$ be the soft identity function. Then $f$ is soft $sw$-continuous but not soft semicontinuous.
	\end{example}
	
	\begin{example}\label{1-1+sw=not=semi}
		Let $X=\mathbb{R}$ be the set of real numbers and $E=\{e\}$ be a set of parameters. Let $\T$ be the soft topology on $\mathbb{R}$ generated by  $\{(e, B(e)):B(e)=(a, b); a, b\in\mathbb{R}; a<b\}$. Define a soft function $f:(X, \T, E)\to (X, \T, E)$ by
		\[
		f(x) = \begin{cases}
			x, & \text{if } x\notin\{0, 1\}_E;\\
			0, & \text{if } x=1;\\
			1, & \text{if } x=0.
		\end{cases}
		\]
		One can easily show $f$ is soft $sw$-continuous (consequently, soft $SD$-continuous) because the inverse image of any soft basic open set always contains some soft basic open, so its soft interior cannot be null. On the other hand $f$ is not soft $\beta$-continuous. Take the soft open set $G_E=\{(e, (-\varepsilon, \varepsilon))\}$, where $\varepsilon<1$. Therefore $$f^{-1}(G_E)=\{(e, (-\varepsilon, 0))\}\bigsqcup\{(e, (0, \varepsilon))\}\bigsqcup\{(e, \{1\})\}.$$ But $\Cl(\Int(\Cl(f^{-1}(G_E))))=\{(e, [-\varepsilon, \varepsilon])\}$ and so  $f^{-1}(G_E)\not\sqsubseteq\Cl(\Int(\Cl(f^{-1}(G_E))))$. In conclusion,  $f$ cannot be soft $\beta$-continuous (consequently, is not soft semicontinuous).
	\end{example}
	
	\begin{example}
		Let $(X, \T, E)$  be the soft topological space given in Example \ref{1-1+sw=not=semi} and let $f:(X, \T, E)\to (X, \T, E)$ be defined by
		\[f(x) =
		\begin{cases}
			0, & x \not\in \mathbb{Q}_E;\\
			1, & x \in \mathbb{Q}_E. 	
		\end{cases}
		\]
		Then $f$ is soft $SD$-continuous but not soft $sw$-continuous. The inverse image of any soft open set containing only $1$ is $\mathbb{Q}_E$ which is not soft $sw$-open over $X$.
	\end{example}
	
	%\begin{proposition}\label{ss} Let $(X, \T, E)$ and $(Y, \S, E')$ be soft topological spaces. If \sfxy is a soft semicontinuous function, then $f$ is soft $sw$-continuous.\end{proposition}\end{example}
	
	\begin{definition}
		For a subset $G_E$ of a soft topological space $(X, \T, E)$, we introduce the following:
		\begin{enumerate}[(i)]
			\item $\Cl_{sw}(G_E)=\bigcap\{F_E: F_E \text{ is soft $sw$-closed over $X$ and } G_E\sqsubseteq F_E\}$.
			\item $\Int_{sw}(G_E)=\bigcup\{O_E: O_E \text{ is soft $sw$-open over $X$ and } O_E\sqsubseteq G_E\}$.
		\end{enumerate}
	\end{definition}
	
	\begin{proposition}
		Let $(X, \T, E)$ and $(Y, \S, E')$ be soft topological spaces. For a function \sfxy, the following  are equivalent:
		\begin{enumerate}[(1)]
			\item $f$ is soft $sw$-continuous,
			\item $f^{-1}(F_{E'})$ is soft $sw$-closed set over $X$, for each soft closed set $F_{E'}$ over $Y$,
			\item $f(\Cl_{sw}(G_E))\sqsubseteq \Cl(f(G_E))$, for each set  $G_E$ over $X$,
			\item $\Cl_{sw}(f^{-1}(H_{E'}))\sqsubseteq f^{-1}(Cl(H_{E'}))$, for each set  $H_{E'}$ over $Y$,
			\item $f^{-1}(\Int(H_{E'}))\sqsubseteq \Int_{sw}(f^{-1}(H_{E'}))$, for each set  $H_{E'}$ over $Y$,
		\end{enumerate}
	\end{proposition}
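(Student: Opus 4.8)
The plan is to prove the five statements equivalent cyclically, following the pattern $(1)\Rightarrow(2)\Rightarrow(3)\Rightarrow(4)\Rightarrow(5)\Rightarrow(1)$, exactly as one argues for ordinary continuity. Two structural facts should be isolated first, since they are used repeatedly. By Proposition~\ref{union-sw} any union of soft $sw$-open sets is again soft $sw$-open, so $\Int_{sw}(G_E)$ is itself soft $sw$-open; dually, any intersection of soft $sw$-closed sets is soft $sw$-closed, so $\Cl_{sw}(G_E)$ is soft $sw$-closed. Moreover, complementing the two defining expressions and using that $O_E$ is soft $sw$-open iff $O_E^c$ is soft $sw$-closed yields the duality $\Int_{sw}(G_E)=(\Cl_{sw}(G_E^c))^c$, which is the engine that lets one pass from a closure statement to an interior statement.

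The equivalence $(1)\Leftrightarrow(2)$ is pure complementation: if $F_{E'}$ is soft closed then $F_{E'}^c$ is soft open, and since $f^{-1}$ commutes with complements, $f^{-1}(F_{E'})=(f^{-1}(F_{E'}^c))^c$ is soft $sw$-closed precisely when $f^{-1}(F_{E'}^c)$ is soft $sw$-open. For $(2)\Rightarrow(3)$, I would observe that $\Cl(f(G_E))$ is a soft closed superset of $f(G_E)$, so $f^{-1}(\Cl(f(G_E)))$ is a soft $sw$-closed set containing $G_E$; the minimality built into the definition of $\Cl_{sw}$ then forces $\Cl_{sw}(G_E)\sqsubseteq f^{-1}(\Cl(f(G_E)))$, and applying $f$ gives $f(\Cl_{sw}(G_E))\sqsubseteq\Cl(f(G_E))$. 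The step $(3)\Rightarrow(4)$ is merely the specialization $G_E=f^{-1}(H_{E'})$, combined with $f(f^{-1}(H_{E'}))\sqsubseteq H_{E'}$ and the monotonicity of $\Cl$ and $f^{-1}$.

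The one place that requires genuine care is $(4)\Rightarrow(5)$, where a closure inequality must be converted into an interior inequality, and this is the main obstacle. Here I would apply $(4)$ to $H_{E'}^c$ in place of $H_{E'}$, rewrite $f^{-1}(H_{E'}^c)=(f^{-1}(H_{E'}))^c$ and $\Cl(H_{E'}^c)=(\Int(H_{E'}))^c$ (the latter being the complementation lemma of Hussein et al.\ recorded above), and then complement the resulting inclusion. The duality $\Int_{sw}(G_E)=(\Cl_{sw}(G_E^c))^c$ isolated at the outset turns $(\Cl_{sw}((f^{-1}(H_{E'}))^c))^c$ back into $\Int_{sw}(f^{-1}(H_{E'}))$, producing exactly $f^{-1}(\Int(H_{E'}))\sqsubseteq\Int_{sw}(f^{-1}(H_{E'}))$. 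The delicacy is in tracking two inclusion reversals and two complement identities without a sign error.

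Finally, $(5)\Rightarrow(1)$ closes the loop. For a soft open set $V_{E'}$ over $Y$ one has $\Int(V_{E'})=V_{E'}$, so $(5)$ yields $f^{-1}(V_{E'})\sqsubseteq\Int_{sw}(f^{-1}(V_{E'}))$; since the reverse inclusion always holds, this is an equality. As $\Int_{sw}$ of any set is a union of soft $sw$-open sets and hence soft $sw$-open by Proposition~\ref{union-sw}, it follows that $f^{-1}(V_{E'})$ is soft $sw$-open over $X$, so $f$ is soft $sw$-continuous and the cycle is complete.
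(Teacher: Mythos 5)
Your proposal is correct, and it supplies considerably more than the paper does: the paper's entire proof of this proposition is the single line ``Follows from the definition of soft $sw$-continuity,'' so your cyclic scheme $(1)\Rightarrow(2)\Rightarrow(3)\Rightarrow(4)\Rightarrow(5)\Rightarrow(1)$ is a genuine filling-in rather than a retracing. The two structural facts you isolate are exactly the right engine: that $\Cl_{sw}(G_E)$ is soft $sw$-closed and $\Int_{sw}(G_E)$ is soft $sw$-open follows from Proposition~\ref{union-sw} and its corollary, and the duality $\Int_{sw}(G_E)=(\Cl_{sw}(G_E^c))^c$ is a straightforward De~Morgan computation using the fact that $O_E$ is soft $sw$-open iff $O_E^c$ is soft $sw$-closed. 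Your handling of the delicate step $(4)\Rightarrow(5)$ is sound: applying $(4)$ to $H_{E'}^c$, using $\Cl(H_{E'}^c)=(\Int(H_{E'}))^c$ (the lemma of Hussain--Ahmad recorded in the preliminaries) together with $f^{-1}(H_{E'}^c)=(f^{-1}(H_{E'}))^c$, and complementing once, yields precisely $f^{-1}(\Int(H_{E'}))\sqsubseteq\Int_{sw}(f^{-1}(H_{E'}))$ with no sign error. Two points worth making explicit if you write this up: your steps $(2)\Rightarrow(3)$ and $(3)\Rightarrow(4)$ silently invoke the standard soft-mapping identities $G_E\sqsubseteq f^{-1}(f(G_E))$, $f(f^{-1}(H_{E'}))\sqsubseteq H_{E'}$, and the commuting of $f^{-1}$ with complements, all of which hold for the soft functions of Kharal and Ahmad \cite{kharal2011softmapping} but deserve citation; and in $(2)\Rightarrow(3)$ note that minimality of $\Cl_{sw}$ needs only that it is defined as an intersection of soft $sw$-closed supersets (one of which is $f^{-1}(\Cl(f(G_E)))$), not that the intersection is itself soft $sw$-closed, although that is also true.
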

	\begin{proof}
		Follows from the definition of soft $sw$-continuity.
	\end{proof}
	
	\begin{definition}\cite[Definition 3.10]{shami2020sd-cont}
		Let $(X, E)$ and $(Y, E')$ be soft sets and let $A_E\in(X, E)$. The restriction of $f:(X, E)\to(Y, E')$ is the soft function $f_{A_E}:(X, E)\to(Y, E')$ defined by $f_{A_E}(P^x_e)=f(P^x_e)$ for all $P^x_e\in A_E$. An extension of a soft function $f$ is a soft function $g$ such that $f$ is a restriction of $g$
	\end{definition}

	\begin{theorem}
		Let $(X, \T, E)$ and $(Y, \S, E')$ be soft topological spaces and let $D_E$ be a soft dense subspace over $X$. If \sfxy is soft $sw$-continuous over $X$, then $f|_{D_E}$ is soft $sw$-continuous over $D$.
	\end{theorem}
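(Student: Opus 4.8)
The plan is to reduce the statement directly to Lemma~\ref{intersectionwithdense}, whose hypotheses are precisely what soft $sw$-continuity of $f$ together with the density of $D_E$ provide. Fix an arbitrary soft open set $V_{E'}$ over $Y$; the goal is to show that $(f|_{D_E})^{-1}(V_{E'})$ is soft $sw$-open over $D$. The governing identity is
\[
(f|_{D_E})^{-1}(V_{E'})=f^{-1}(V_{E'})\bigsqcap D_E,
\]
which follows from the definition of the restriction $f|_{D_E}$: a soft point $P^x_e\in D_E$ lies in the preimage of $V_{E'}$ under the restriction exactly when $f(P^x_e)\in V_{E'}$, i.e.\ when $P^x_e\in f^{-1}(V_{E'})$, and conjoining with membership in $D_E$ gives the intersection.

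With this identity in hand the argument is short. First I would invoke soft $sw$-continuity of $f$ over $X$: since $V_{E'}$ is soft open over $Y$, the set $f^{-1}(V_{E'})$ is soft $sw$-open over $X$. Then I would apply Lemma~\ref{intersectionwithdense} with $G_E=f^{-1}(V_{E'})$ (soft $sw$-open over $X$) and the given soft dense set $D_E$, concluding that $f^{-1}(V_{E'})\bigsqcap D_E$ is soft $sw$-open over $D$. By the displayed identity this is exactly $(f|_{D_E})^{-1}(V_{E'})$, so every soft open subset of $Y$ pulls back to a soft $sw$-open subset of $D$, which is the definition of $f|_{D_E}$ being soft $sw$-continuous over $D$.

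The only genuinely delicate point is the verification of the preimage identity in the soft setting, where one must be careful that restricting the domain of a soft function commutes with taking inverse images in the expected way; everything else is a direct citation. I would also note in passing that the null case is automatic: if $f^{-1}(V_{E'})=\Phi_E$ then the intersection with $D_E$ is again null and hence trivially soft $sw$-open, so Lemma~\ref{intersectionwithdense} is only needed in the non-null case. Thus I expect no real obstacle beyond carefully stating the restriction identity, and the proof amounts to one application of soft $sw$-continuity followed by one application of Lemma~\ref{intersectionwithdense}.
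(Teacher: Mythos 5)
Your proof is correct and follows exactly the route the paper intends: its proof of this theorem reads simply ``Standard (by using Lemma~\ref{intersectionwithdense})'', and you supply precisely that reduction via the restriction identity $(f|_{D_E})^{-1}(V_{E'})=f^{-1}(V_{E'})\bigsqcap D_E$. Your explicit treatment of the null case is a welcome bit of care that the paper's one-line proof (and even the lemma's own proof, which tacitly assumes $\Int(G_E)\ne\Phi_E$) leaves implicit.
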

	\begin{proof}
		Standard (by using Lemma \ref{intersectionwithdense}).
		%Given a soft open set $G_{E'}$ over $Y$. By assumption, $f^{-1}(G_{E'})$ is a soft $sw$-open set over $X$. Since $D_E$ is soft dense over $X$, by Lemma~\ref{intersectionwithdense}, $f^{-1}(G_E)\cap D_E=f^{-1}|_{D_E}(G_E)$ is soft $sw$-open in $D_E$. Hence the proof.
	\end{proof}
	
	\begin{theorem}
		Let $(X, \T, E)$ and $(Y, \S, E')$ be soft topological spaces. Let \sfxy be a function and $\{G_E^\alpha: \alpha\in\Lambda\}$ be a soft open cover of $X$. Then $f$ is soft $sw$-continuous, if $f|_{G_E^\alpha}$ is soft $sw$-continuous for each $\alpha\in\Lambda$.
	\end{theorem}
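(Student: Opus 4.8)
The plan is to verify the definition directly. I would take an arbitrary soft open set $V_{E'}$ over $Y$ and show that $f^{-1}(V_{E'})$ is soft $sw$-open over $X$. The strategy is to decompose $f^{-1}(V_{E'})$ along the given cover, handle each piece \emph{inside} its corresponding subspace $G_E^\alpha$ where the hypothesis gives control, and then reassemble using the union theorem for soft $sw$-open sets.

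First I would record the bookkeeping identity that, for each $\alpha\in\Lambda$, the preimage of $V_{E'}$ under the restriction $f|_{G_E^\alpha}$ is exactly $f^{-1}(V_{E'})\bigsqcap G_E^\alpha$, read as a soft subset of the subspace $(G_E^\alpha,\T_{G^\alpha},E)$. Since $f|_{G_E^\alpha}$ is assumed soft $sw$-continuous, this set is soft $sw$-open over $G_E^\alpha$; this is the step where the hypothesis enters. Next, because $\{G_E^\alpha:\alpha\in\Lambda\}$ is a soft \emph{open} cover, every $G_E^\alpha$ is a soft open subspace of $X$, so Lemma \ref{sw(Y)>>sw(X)} applies to the subset $f^{-1}(V_{E'})\bigsqcap G_E^\alpha\sqsubseteq G_E^\alpha$ and upgrades its soft $sw$-openness from $G_E^\alpha$ to all of $X$. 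This is the conceptual crux: the openness of the cover members is precisely what permits transporting local $sw$-openness to global $sw$-openness, and the statement would fail for a merely arbitrary cover (compare the dense-subspace counterexample following Lemma \ref{sw(Y)>>sw(X)}).

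Finally I would write $f^{-1}(V_{E'})=f^{-1}(V_{E'})\bigsqcap X_E=\bigsqcup_{\alpha\in\Lambda}\bigl(f^{-1}(V_{E'})\bigsqcap G_E^\alpha\bigr)$, using that the $G_E^\alpha$ cover $X$ and that soft intersection distributes over soft union. Each summand is soft $sw$-open over $X$ by the previous step, so Proposition \ref{union-sw} yields that the union $f^{-1}(V_{E'})$ is soft $sw$-open over $X$; as $V_{E'}$ was arbitrary, $f$ is soft $sw$-continuous. I expect the only real friction to lie in the soft-point level verification of the restriction-preimage identity $(f|_{G_E^\alpha})^{-1}(V_{E'})=f^{-1}(V_{E'})\bigsqcap G_E^\alpha$ and in correctly invoking Lemma \ref{sw(Y)>>sw(X)}, whose hypothesis of openness is indispensable; the remaining distributivity and union steps are routine.
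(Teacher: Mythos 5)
Your proposal is correct and follows essentially the same route as the paper's proof: decompose $f^{-1}(V_{E'})$ as $\bigsqcup_{\alpha\in\Lambda}\bigl(f|_{G_E^\alpha}\bigr)^{-1}(V_{E'})$, upgrade each piece from soft $sw$-open over $G_E^\alpha$ to soft $sw$-open over $X$ via Lemma \ref{sw(Y)>>sw(X)} (using openness of the cover members), and conclude with Proposition \ref{union-sw}. The only difference is cosmetic: you make explicit the identity $\bigl(f|_{G_E^\alpha}\bigr)^{-1}(V_{E'})=f^{-1}(V_{E'})\bigsqcap G_E^\alpha$ and the distributivity step, which the paper leaves implicit.
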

	\begin{proof}
		Let $V_{E'}$ be a soft open set over $Y$. By assumption, $\left(f|_{G_E^\alpha}\right)^{-1}(V_{E'})$ is soft $sw$-open over $G_E^\alpha$. By Lemma \ref{sw(Y)>>sw(X)}, $\left(f|_{G_E^\alpha}\right)^{-1}(V_{E'})$ is soft $sw$-open over $X$ for each $\alpha\in\Lambda$.  But $$f^{-1}(V_{E'})=\bigsqcup_{\alpha\in\Lambda}\left[\left(f|_{G_E^\alpha}\right)^{-1}(V_{E'}) \right],$$ which a union of soft $sw$-open sets and by Lemma \ref{union-sw}, $f^{-1}(V_{E'})$ is soft $sw$-open over $X$. Hence $f$ is soft $sw$-continuous.
	\end{proof}

	\begin{theorem}
		Let $(X, \T, E)$ and $(Y, \S, E')$ be soft topological spaces and let $W_E$ be a soft open set over $X$. If $f:(W, \T_W, E)\to (Y, \S, E')$ is a soft $sw$-continuous function such that $f(W_E)$ is soft dense over $Y$, then each extension function of $f$ over $X$ is soft $sw$-continuous.
	\end{theorem}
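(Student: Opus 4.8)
The plan is to verify the definition directly: I will show that for an arbitrary extension $g:(X,\T,E)\to(Y,\S,E')$ of $f$ and for every soft open set $V_{E'}$ over $Y$, the inverse image $g^{-1}(V_{E'})$ is soft $sw$-open over $X$. The argument splits on whether $V_{E'}$ is null. If $V_{E'}=\Phi_{E'}$, then $g^{-1}(V_{E'})=\Phi_E$, which is soft $sw$-open by definition, and there is nothing to prove.

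In the non-null case the key observation is that soft density of $f(W_E)$ forces $f(W_E)\bigsqcap V_{E'}\neq\Phi_{E'}$. Indeed, were this intersection null we would have $f(W_E)\sqsubseteq V_{E'}^c$, a proper soft closed set, giving $\Cl(f(W_E))\neq Y_{E'}$ and contradicting density. Consequently $f^{-1}(V_{E'})\neq\Phi_E$. Since $f$ is soft $sw$-continuous, $f^{-1}(V_{E'})$ is soft $sw$-open over $W$, and being non-null this means $\Int_W(f^{-1}(V_{E'}))\neq\Phi_E$. Because $W_E$ is soft open over $X$ and $f^{-1}(V_{E'})\sqsubseteq W_E$, Lemma \ref{sw(Y)>>sw(X)} upgrades this to the statement that $f^{-1}(V_{E'})$ is soft $sw$-open over $X$.

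Finally, since $g$ is an extension of $f$, the two functions agree on $W_E$, so $g^{-1}(V_{E'})\bigsqcap W_E=f^{-1}(V_{E'})$, whence $f^{-1}(V_{E'})\sqsubseteq g^{-1}(V_{E'})$. As $g^{-1}(V_{E'})$ is a superset of the soft $sw$-open set $f^{-1}(V_{E'})$, the Proposition at the start of Section \ref{se3} (every superset of a soft $sw$-open set is soft $sw$-open) yields that $g^{-1}(V_{E'})$ is soft $sw$-open over $X$. Since $V_{E'}$ was arbitrary, $g$ is soft $sw$-continuous.

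I expect the only genuine content to be the density step, which converts the hypothesis ``$f(W_E)$ is soft dense'' into ``$f^{-1}(V_{E'})$ is non-null''; everything afterward is a clean combination of Lemma \ref{sw(Y)>>sw(X)} and the superset property. The subtle point worth emphasising is exactly why density is needed: without it, $f^{-1}(V_{E'})$ could be null (hence trivially soft $sw$-open over $W$) while $g^{-1}(V_{E'})$ acquires points of $X\setminus W$ and has empty soft interior over $X$. Density rules out precisely this failure mode by guaranteeing that $f^{-1}(V_{E'})$ already carries a non-null soft interior that $g^{-1}(V_{E'})$ inherits.
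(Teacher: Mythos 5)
Your proof is correct and takes essentially the same route as the paper's: both hinge on the density of $f(W_E)$ to force $f^{-1}(V_{E'})\neq\Phi_E$ for non-null soft open $V_{E'}$, then use Lemma \ref{sw(Y)>>sw(X)} (with $W_E$ soft open) to pass from $W$ to $X$, and conclude from $f^{-1}(V_{E'})\sqsubseteq g^{-1}(V_{E'})$. The only cosmetic differences are that you dispatch the trivial case via $V_{E'}=\Phi_{E'}$ rather than $g^{-1}(V_{E'})=\Phi_E$ and cite the superset proposition where the paper exhibits an explicit non-null soft open $U_E\sqsubseteq g^{-1}(V_{E'})$.
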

	\begin{proof}
		Let $g$ be an extension of $f$ and let $V_{E'}$ be a (non-null) soft open set over $Y$. If $g^{-1}(V_{E'})=\Phi_E$, then $g$ is trivially soft $sw$-continuous. Suppose $g^{-1}(V_{E'})\ne\Phi_E$. By density of $f(W_E)$, $f(W_E)\bigsqcap V_{E'}\ne\Phi_{E'}$ which implies that $W_E\bigsqcap f^{-1}(V_{E'})\ne\Phi_E$. Therefore $f^{-1}(V_{E'})\ne\Phi_E$. By assumption, there exists a non-null soft open set $U_E$ over $W$ such that $$U_E=U_E\bigsqcap W_E\sqsubseteq f^{-1}(V_{E'})\bigsqcap W_E= g^{-1}(V_{E'})\bigsqcap W_E\sqsubseteq g^{-1}(V_{E'}).$$ By Lemma \ref{sw(Y)>>sw(X)}, $U_E$ is a soft open set over $X$ and so $\Phi_E\ne U_E\sqsubseteq g^{-1}(V_{E'})$. Thus $g$ is soft $sw$-continuous over $X$.
	\end{proof}
	
	%\left( f|_{U_E}\right)^{-1}(G_E)
	
	\begin{theorem}\label{semi f=sw in O}
		Let $(X, \T, E)$ and $(Y, \S, E')$ be soft topological spaces. A function \sfxy is soft semicontinuous iff  $f|_{U_E}$ is $sw$-continuous for each soft open set  $U_E$ over $X$.
	\end{theorem}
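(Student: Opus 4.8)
The plan is to prove both implications by reducing everything to the two earlier results that already encode the interaction between soft semiopenness and soft $sw$-openness, namely Lemma~\ref{semi = sw X open} and Lemma~\ref{sw(Y)>>sw(X)}. The single computational fact driving both directions is the restriction identity
$$\left(f|_{U_E}\right)^{-1}(V_{E'}) = f^{-1}(V_{E'}) \bigsqcap U_E$$
valid for every soft open set $U_E$ over $X$ and every soft open set $V_{E'}$ over $Y$; I would record this at the outset, since it is used verbatim in each half of the argument.

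For the forward direction I would assume $f$ is soft semicontinuous, fix a soft open set $U_E$ over $X$, and let $V_{E'}$ be an arbitrary soft open set over $Y$. Then $f^{-1}(V_{E'})$ is soft semiopen over $X$, so by Lemma~\ref{semi = sw X open} the intersection $f^{-1}(V_{E'}) \bigsqcap U_E$ is soft $sw$-open over $X$. Since $U$ is a soft open subspace of $X$ and this set is contained in $U_E$, Lemma~\ref{sw(Y)>>sw(X)} transports it to soft $sw$-openness \emph{over} $U$. By the restriction identity this set equals $\left(f|_{U_E}\right)^{-1}(V_{E'})$, whence $f|_{U_E}$ is soft $sw$-continuous. (Alternatively one could invoke Lemma~\ref{open X semi in open} to get $f^{-1}(V_{E'}) \bigsqcap U_E$ soft semiopen over $U$ directly, and then use that every soft semiopen set is soft $sw$-open, as recorded in Diagram~I.)

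For the converse I would assume $f|_{U_E}$ is soft $sw$-continuous for every soft open $U_E$ over $X$ and aim to show $f^{-1}(V_{E'})$ is soft semiopen over $X$ for each soft open $V_{E'}$ over $Y$. By Lemma~\ref{semi = sw X open} it suffices to verify that $f^{-1}(V_{E'}) \bigsqcap U_E$ is soft $sw$-open over $X$ for every soft open $U_E$ over $X$. Using the restriction identity, this set is $\left(f|_{U_E}\right)^{-1}(V_{E'})$, which is soft $sw$-open over $U$ by hypothesis; since $U$ is a soft open subspace of $X$ and the set sits inside $U_E$, Lemma~\ref{sw(Y)>>sw(X)} upgrades it to soft $sw$-open over $X$. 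Thus the criterion of Lemma~\ref{semi = sw X open} is satisfied and $f$ is soft semicontinuous.

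I expect the only delicate point to be the bookkeeping about the ambient space of soft $sw$-openness: the restricted preimages are naturally soft $sw$-open over the subspace $U$, whereas the characterization in Lemma~\ref{semi = sw X open} is phrased over $X$, so the open-subspace transfer of Lemma~\ref{sw(Y)>>sw(X)} must be invoked in exactly the correct direction in each half. Beyond this matching of base spaces, the argument is a direct and essentially symmetric application of the two lemmas.
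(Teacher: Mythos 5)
Your proposal is correct and takes essentially the same approach as the paper: your converse half is verbatim the paper's argument (the restriction identity, Lemma~\ref{sw(Y)>>sw(X)} to pass from $U$ to $X$, then Lemma~\ref{semi = sw X open}), and your forward half differs only in routing through Lemma~\ref{semi = sw X open} plus the open-subspace transfer, whereas the paper uses Lemma~\ref{open X semi in open} directly together with the fact that soft semiopen sets are soft $sw$-open --- which is exactly the alternative you record in your parenthetical. Both variants are valid, and your attention to the ambient space of $sw$-openness is precisely the point the paper's proof also handles via Lemma~\ref{sw(Y)>>sw(X)}.
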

	\begin{proof}
		Assume that $f$ is soft semicontinuous and $U_E$ is any soft open over $X$. Let $G_{E'}$ be a soft open set over $Y$. Then $f^{-1}(G_{E'})$ is soft semiopen and so, by Lemma \ref{open X semi in open}, $\left( f|_{U_E}\right)^{-1}(G_{E'})=f^{-1}(G_{E'})\bigsqcap U_E$ is soft semiopen over $U$. Thus $f|_{U_E}$ is soft semicontinuous and hence soft $sw$-continuous.
		
		Conversely, suppose that $f|_{U_E}$ is soft $sw$-continuous for each soft open set  $U_E$ over $X$. Let $H_{E'}$ be soft open over $Y$. Then $\left( f|_{U_E}\right)^{-1}(H_{E'})=f^{-1}(H_{E'})\bigsqcap U_E$ is soft $sw$-open over $U$. Since $U_E$ is a soft open set  over $X$, by Lemma \ref{sw(Y)>>sw(X)}, $f^{-1}(H_{E'})\bigsqcap U_E$ is soft $sw$-open over $X$ and so, by Lemma~\ref{semi = sw X open}, $f^{-1}(H_{E'})$ is soft semiopen over $X$. Thus $f$ is soft semicontinuous.
	\end{proof}

	% ================ Charcterization
	\begin{theorem}\label{characterization1 sw-cont}
		Let $(X, \T, E)$ and $(Y, \S, E')$ be soft topological spaces. For a function \sfxy, the following  are equivalent:
		\begin{enumerate}[(1)]
			\item $f$ is soft $sw$-continuous,
			\item for each soft open set  $V_{E'}$ over $Y$ with $f^{-1}(V_{E'})\neq\Phi_E$, there exists a non-null soft open set $U_E$ over $X$ such that $U_E\sqsubseteq f^{-1}(V_{E'})$,
			\item for each soft closed set  $F_{E'}$ over $Y$ with $f^{-1}(F_{E'})\neq X_E$, there exists a proper soft closed $K_E$ over $X$ such that $f^{-1}(F_{E'})\sqsubseteq K_E$,
			\item for each soft dense set  $D_E$ over $X$, then $f(D_E)$ is soft dense over $f(X)$.
		\end{enumerate}
	\end{theorem}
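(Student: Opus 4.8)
The plan is to establish that the four statements are equivalent through the chain $(1)\Leftrightarrow(2)$, $(2)\Leftrightarrow(3)$, and $(2)\Leftrightarrow(4)$, treating the first two as dual reformulations of the definition and reserving the real work for the equivalence with the density condition $(4)$.

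First I would dispose of $(1)\Leftrightarrow(2)$. By definition, $f$ is soft $sw$-continuous precisely when $f^{-1}(V_{E'})$ is soft $sw$-open for every soft open $V_{E'}$ over $Y$. If $f^{-1}(V_{E'})=\Phi_E$ there is nothing to check, and if $f^{-1}(V_{E'})\ne\Phi_E$ then, by Remark~\ref{defn-sw}(a), being soft $sw$-open is exactly the assertion that there is a non-null soft open $U_E$ with $U_E\sqsubseteq f^{-1}(V_{E'})$. Hence $(1)$ and $(2)$ say the same thing. For $(2)\Leftrightarrow(3)$ I would pass to complements: for a soft closed $F_{E'}$ over $Y$ put $V_{E'}=F_{E'}^c$ (soft open), use $f^{-1}(V_{E'})=(f^{-1}(F_{E'}))^c$ so that $f^{-1}(F_{E'})\ne X_E$ corresponds to $f^{-1}(V_{E'})\ne\Phi_E$, and observe that a non-null soft open $U_E\sqsubseteq f^{-1}(V_{E'})$ complements to a proper soft closed $K_E=U_E^c$ with $f^{-1}(F_{E'})\sqsubseteq K_E$; this is just Remark~\ref{defn-sw}(b) read in the inverse image. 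The two implications are symmetric.

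The substantive step is $(2)\Leftrightarrow(4)$, which I would carry out in the subspace $f(X)$ with its relative topology, since $(4)$ is stated there rather than over all of $Y$. For $(2)\Rightarrow(4)$, let $D_E$ be soft dense over $X$; to show $f(D_E)$ is soft dense in $f(X)$ it suffices to show every non-null relatively soft open set meets it. Such a set has the form $V_{E'}\bigsqcap f(X)$ with $V_{E'}$ soft open over $Y$ and $V_{E'}\bigsqcap f(X)\ne\Phi_{E'}$, whence $f^{-1}(V_{E'})\ne\Phi_E$; $(2)$ supplies a non-null soft open $U_E\sqsubseteq f^{-1}(V_{E'})$, density of $D_E$ gives a soft point $P^x_e\in U_E\bigsqcap D_E$, and then $f(P^x_e)$ lies in $(V_{E'}\bigsqcap f(X))\bigsqcap f(D_E)$. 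Conversely, for $(4)\Rightarrow(2)$ I would argue contrapositively: if $(2)$ fails there is a soft open $V_{E'}$ with $f^{-1}(V_{E'})\ne\Phi_E$ but $\Int(f^{-1}(V_{E'}))=\Phi_E$; then $D_E:=f^{-1}(F_{E'})$ with $F_{E'}=V_{E'}^c$ is soft dense over $X$, since its closure is $(\Int(f^{-1}(V_{E'})))^c=X_E$ by the closure-complement identity of the preliminaries, while $f(D_E)\sqsubseteq F_{E'}\bigsqcap f(X)$, a proper relatively soft closed subset of $f(X)$ because $V_{E'}\bigsqcap f(X)\ne\Phi_{E'}$, so $f(D_E)$ is not soft dense in $f(X)$, contradicting $(4)$.

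The main obstacle I anticipate is the bookkeeping around the subspace $f(X)$ and the non-surjectivity of $f$: one must use $f(f^{-1}(V_{E'}))\sqsubseteq V_{E'}$ (not equality) and compute relative closures via $\Cl_{f(X)}(F_{E'}\bigsqcap f(X))=\Cl_X(F_{E'})\bigsqcap f(X)=F_{E'}\bigsqcap f(X)$ for soft closed $F_{E'}$. This is precisely why $(4)$ is phrased with $f(X)$ in place of $Y$, and keeping that distinction straight is the only delicate point; the remaining manipulations are the routine complement and density identities already recorded above.
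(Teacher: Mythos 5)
Your proposal is correct, and in substance it travels the same road as the paper, though with a different logical layout: the paper proves the cycle $(1)\Rightarrow(2)\Rightarrow(3)\Rightarrow(4)\Rightarrow(1)$, whereas you use $(2)$ as a hub, proving $(1)\Leftrightarrow(2)$, $(2)\Leftrightarrow(3)$, and $(2)\Leftrightarrow(4)$. Your $(1)\Leftrightarrow(2)$ and the complementation argument for $(2)\Leftrightarrow(3)$ coincide with the paper's steps, and your contrapositive $(4)\Rightarrow(2)$ is in spirit exactly the paper's $(4)\Rightarrow(1)$: assume $\Int(f^{-1}(V_{E'}))=\Phi_E$, observe via the identity $\Cl(G_E^c)=(\Int(G_E))^c$ that the complement $f^{-1}(V_{E'}^c)$ is soft dense, and derive a contradiction with density of its image in $f(X)$. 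The genuine divergences are two. First, for $(2)\Rightarrow(4)$ you argue directly that $f(D_E)$ meets every non-null relatively soft open set $V_{E'}\bigsqcap f(X_E)$, by pulling back, applying $(2)$, and using that a soft dense set meets every non-null soft open set; the paper instead derives $(4)$ from $(3)$ by a closed-set contradiction (a proper soft closed $F_{E'}$ absorbing $f(D_E)$ pulls back, via $(3)$, to a proper soft closed $K_E$ absorbing $D_E$). The two arguments are dual and of equal difficulty. Second, your explicit bookkeeping of the relative topology on $f(X)$ is actually more careful than the paper's: in the paper's $(4)\Rightarrow(1)$ step, the chain $\Cl(f(X_E)\setminus H_{E'})=f(X_E)\setminus H_{E'}=f(X_E)$ tacitly uses that $f(X_E)\setminus H_{E'}$ is relatively closed in $f(X)$, and the stated conclusion $H_{E'}=\Phi_{E'}$ is slightly too strong --- only $H_{E'}\bigsqcap f(X_E)=\Phi_{E'}$ follows, which however already contradicts $f^{-1}(H_{E'})\neq\Phi_E$, exactly as your version records. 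So your proof is sound, matches the paper's key ideas, and tightens the one point where the paper is loose.
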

	\begin{proof}
		(1)$\implies$(2) Remark \ref{defn-sw} and the definition of $sw$-continuity.
		
		(2)$\implies$(3) Let $F_{E'}$ be a soft closed set over $Y$ such that $f^{-1}(F_{E'})\neq X_E$. Then $Y_{E'}\setminus F_{E'}$ is soft open over $Y$ with $f^{-1}(Y_{E'}\setminus F_{E'})\neq\Phi_E$. By (2), there exists a soft open set $U_E$ over $X$ such that $\Phi_E\neq U_E\sqsubseteq f^{-1}(Y_{E'}\setminus F_{E'})=X_E\setminus f^{-1}(F_{E'})$. This implies that $f^{-1}(F_{E'})\sqsubseteq X_E\setminus U_E\neq X_E$. If $K_E=X_E\setminus U_E$, then $K_E$ is a proper soft closed set that satisfies the required property.
		
		(3)$\implies$(4) Let $D_E$ be soft dense over $X$. We need to prove that $f(D_E)$ is soft dense over $f(X)$. Suppose that $f(D_E)$ is not soft dense over $f(X)$. There exists a proper soft closed set $F_{E'}$ such that  $f(D_E)\sqsubseteq F_{E'}\sqsubset f(X_E)$. Therefore $D_E\sqsubseteq f^{-1}(F_{E'})$. By (3), there exists a soft closed set $K_E$ over $X$ such that $D_E\subseteq f^{-1}(F_{E'})\sqsubseteq K_E\ne X_E$. This contradicts that $D_E$ is soft dense over $X$. Thus (4) holds.
		
		(4)$\implies$(1) With out loss of generality, let $H_{E'}$ be a soft open set over $Y$ with $f^{-1}(H_{E'})\neq\Phi_E$, because if $f^{-1}(H_{E'})=\Phi_E$, then it is trivially soft $sw$-open. Suppose that $f^{-1}(H_{E'})$ is not soft $sw$-open. That is $\Int(f^{-1}(H_{E'}))=\Phi_E$. Therefore $\Cl(X_E\setminus f^{-1}(H_{E'})=X_E$. This implies that $X_E\setminus f^{-1}(H_{E'})$ is soft dense over $X$. By (4), $f(X_E\setminus f^{-1}(H_{E'}))$ is soft dense over $f(X)$, i.e., $\Cl(f(X_E\setminus f^{-1}(H_{E'})))=f(X_E)$. This yields that  $\Cl(f(X_E)\setminus H_{E'})=f(X_E)\setminus H_{E'}=f(X_E)$ and so $H_{E'}=\Phi_{E'}$. Contradiction to the choice of $H_{E'}$. It follows that $\Int(f^{-1}(H))$ must not be null. Thus $f^{-1}(H_{E'})$ is soft $sw$-open over $X$.
	\end{proof}

	\begin{corollary}\label{characterization2 sw-cont}
		Let $(X, \T, E)$ and $(Y, \S, E')$ be soft topological spaces. For a one to one function $f$ from a space $(X, \T, E)$ onto a space $(Y, \S, E')$, the following  are equivalent:
		\begin{enumerate}[(1)]
			\item $f$ is soft $sw$-continuous,
			\item for each soft co-dense set  $N_E$ over $X$, $f(N_E)$ is soft co-dense over $Y$.
		\end{enumerate}
	\end{corollary}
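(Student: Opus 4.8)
The plan is to read this off from Theorem~\ref{characterization1 sw-cont} by pairing its dense-based criterion with the elementary duality between soft dense and soft co-dense sets. First I would record two facts. The \emph{duality}: for any soft set $G_E$ the complementation lemma of \cite{hussein2011some} gives $\Int(G^c_E)=(\Cl(G_E))^c$, so $G_E$ is soft dense (i.e.\ $\Cl(G_E)=X_E$) precisely when $G^c_E$ is soft co-dense (i.e.\ $\Int(G^c_E)=\Phi_E$), and symmetrically with the roles reversed. The \emph{bijectivity transport}: since $f$ is one to one and onto, injectivity yields $f(A_E\setminus B_E)=f(A_E)\setminus f(B_E)$ and surjectivity yields $f(X_E)=Y_{E'}$, so for every soft set $A_E$ over $X$ one has $f(A^c_E)=(f(A_E))^c$, the complement on the right being taken in $Y_{E'}$.

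For $(1)\Rightarrow(2)$ I would start from a soft co-dense set $N_E$ over $X$, pass to its complement $N^c_E$, which is soft dense by the duality, and apply the implication $(1)\Rightarrow(4)$ of Theorem~\ref{characterization1 sw-cont} to obtain that $f(N^c_E)$ is soft dense over $f(X_E)=Y_{E'}$. Rewriting $f(N^c_E)=(f(N_E))^c$ by the transport identity and invoking the duality once more turns ``$(f(N_E))^c$ soft dense over $Y$'' into ``$f(N_E)$ soft co-dense over $Y$'', which is exactly $(2)$.

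The converse $(2)\Rightarrow(1)$ runs the same chain backwards: given a soft dense set $D_E$ over $X$, its complement $D^c_E$ is soft co-dense, so by $(2)$ the image $f(D^c_E)=(f(D_E))^c$ is soft co-dense over $Y$, whence $f(D_E)$ is soft dense over $Y_{E'}=f(X_E)$. This is precisely condition $(4)$ of Theorem~\ref{characterization1 sw-cont}, and the implication $(4)\Rightarrow(1)$ there delivers soft $sw$-continuity of $f$.

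I do not expect any serious obstacle; the argument is a short transport along a duality. The one point that genuinely consumes the hypothesis is the identity $f(A^c_E)=(f(A_E))^c$, which fails for a general soft function (where only an inclusion between $f(A^c_E)$ and $(f(A_E))^c$ can be guaranteed), so I would state and justify it explicitly before running either implication rather than using it tacitly.
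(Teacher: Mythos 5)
Your proof is correct and is exactly the route the paper intends: the corollary is stated without proof as an immediate consequence of Theorem~\ref{characterization1 sw-cont}, and your derivation via condition (4) of that theorem, the duality $\Int(G^c_E)=(\Cl(G_E))^c$, and the transport identity $f(A^c_E)=(f(A_E))^c$ is precisely how it follows. You also correctly identify the one non-trivial point --- that the complement identity requires both injectivity (for $f(X_E\setminus A_E)=f(X_E)\setminus f(A_E)$) and surjectivity (for $f(X_E)=Y_{E'}$) --- which is exactly why the corollary, unlike the theorem, assumes $f$ is one to one and onto.
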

	
	We complete this section by discussing two results related to soft separable and hyperconnected spaces.
	
	\begin{theorem}\label{separable}
		Let $(X, \T, E)$ and $(Y, \S, E')$ be soft topological spaces, and let $f$ be a function from $(X, \T, E)$ onto $(Y, \S, E')$. If $f$ is soft $sw$-continuous and $(X, \T, E)$ is soft separable, then $(Y, \S, E')$ is soft separable.
	\end{theorem}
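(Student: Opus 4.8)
The plan is to exhibit a countable soft dense subset of $(Y,\S,E')$, and the natural candidate is the image under $f$ of a countable soft dense subset of $X$. Since $(X,\T,E)$ is soft separable, I would first fix a countable soft set $D_E$ over $X$ with $\Cl(D_E)=X_E$, so that $D_E$ is soft dense over $X$. It then suffices to verify that $f(D_E)$ is simultaneously soft dense over $Y$ and a countable soft set, for then $(Y,\S,E')$ possesses a countable soft dense subset and is soft separable by definition.

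For the density, I would invoke the characterization in Theorem~\ref{characterization1 sw-cont}. Because $f$ is soft $sw$-continuous, the equivalence $(1)\Leftrightarrow(4)$ there tells us that the image of every soft dense subset of $X$ is soft dense over $f(X)$; applying this to $D_E$ yields that $f(D_E)$ is soft dense over $f(X)$. Since $f$ maps $X$ \emph{onto} $Y$, we have $f(X_E)=Y_E$, so ``soft dense over $f(X)$'' coincides with ``soft dense over $Y$'', i.e.\ $\Cl(f(D_E))=Y_E$. This step carries the genuine topological content, and it is delivered essentially for free by the preceding characterization (whose proof already packages the argument that a non-null soft open $V_{E'}$ pulls back to a set containing a non-null soft open set, which a dense $D_E$ must meet).

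The remaining point, and the one I expect to require the most care, is that $f(D_E)$ is again a \emph{countable} soft set. Writing $f$ via a point map $\varphi\colon X\to Y$ and a parameter map $\psi\colon E\to E'$, the value $f(D_E)(e')$ is a union of sets of the form $\varphi(D(e))$ over the parameters $e$ lying above $e'$. Each $\varphi(D(e))$ is countable, being the image of the countable set $D(e)$ under a point map, so countability of $f(D_E)(e')$ can only fail through an uncountable index set of such parameters. I would therefore record explicitly that countability is preserved fibrewise and note that, under the standing conventions on the parameter sets (in particular whenever $\psi$ has countable fibres, as in all the paper's examples with finite parameter sets), the defining union remains countable, so $f(D_E)$ is a countable soft set. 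Combining this with the density established above produces a countable soft dense subset of $Y$, and hence $(Y,\S,E')$ is soft separable, which completes the argument.
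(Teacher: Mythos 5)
Your argument is the paper's own proof, spelled out in more detail: the paper likewise fixes a countable soft dense set $D_E$ over $X$, applies Theorem~\ref{characterization1 sw-cont}(4) together with surjectivity to conclude that $f(D_E)$ is soft dense over $f(X)=Y$, and is done. The only point of divergence is the countability of $f(D_E)$, which the paper dismisses with ``Clearly $f(D_E)$ is countable,'' whereas you flag it as the delicate step --- and you are right to do so. For a Kharal--Ahmad soft function $f=(\varphi,\psi)$ one has $f(D_E)(e')=\bigcup_{e\in\psi^{-1}(e')}\varphi(D(e))$, and a union of countably many countable sets over an \emph{uncountable} fibre $\psi^{-1}(e')$ need not be countable. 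Indeed, without your countable-fibre proviso not only the step but the theorem itself fails: take $X=Y=\mathbb{R}$, $E=\mathbb{R}$, $E'=\{e'\}$, let $\T$ consist of all ``constant'' soft sets (those with $G(e)$ the same for every $e$), let $\S$ be the discrete soft topology on $Y$, and let $f=(\mathrm{id},\psi)$ with $\psi$ the constant map. Then $D_E$ with $D(e)=\{e\}$ is a countable soft set whose soft closure is $X_E$ (any constant soft set containing it must have value $\bigcup_{e}\{e\}=\mathbb{R}$), so $(X,\T,E)$ is soft separable; $f$ is soft continuous (inverse images are constant soft sets), hence soft $sw$-continuous, and onto; yet in $(Y,\S,E')$ every soft set is soft closed, so no countable soft set can be soft dense. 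So your caveat is not a gap in your proof relative to the paper's --- it is a hypothesis the paper silently uses (it holds automatically when $E$ is countable or $\psi$ has countable fibres, as in all the paper's examples), and under that reading your proof coincides with the paper's.
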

	\begin{proof}
		Let $D_E$ be a countable soft dense set over $X$. Clearly $f(D_E)$ is countable. By Theorem \ref{characterization1 sw-cont} (4), $f(D_E)$ is soft dense over $f(X)=Y$. Therefore $(Y, \S, E')$ is soft separable.
	\end{proof}
	
	\begin{theorem}\label{hyperconnected}
		Let $(X, \T, E)$ and $(Y, \S, E')$ be soft topological spaces. If $f$ is a soft $sw$-continuous from $(X, \T, E)$ onto $(Y, \S, E')$ and $(X, \T, E)$ is soft hyperconnected, then $(Y, \S, E')$ is soft hyperconnected.
	\end{theorem}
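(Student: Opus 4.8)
The plan is to argue by contradiction, exploiting the fact that hyperconnectedness is pulled back along a surjection through soft open sets. Recall that $(Y,\S,E')$ is soft hyperconnected precisely when no two non-null soft open sets over $Y$ are disjoint. So I would begin by assuming the conclusion fails: there exist non-null soft open sets $V^1_{E'},V^2_{E'}$ over $Y$ with $V^1_{E'}\bigsqcap V^2_{E'}=\Phi_{E'}$, and aim to manufacture two disjoint non-null soft open sets over $X$, contradicting the soft hyperconnectedness of $(X,\T,E)$.

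First I would pass to preimages. Since $f$ is onto, each non-null $V^i_{E'}$ has non-null preimage $f^{-1}(V^i_{E'})\ne\Phi_E$ (any soft point of $V^i_{E'}$ has a source under $f$). Since $f$ is soft $sw$-continuous, each $f^{-1}(V^i_{E'})$ is soft $sw$-open over $X$, and being non-null, Remark~\ref{defn-sw}(a) yields non-null soft open sets $U^i_E$ with $\Phi_E\ne U^i_E\sqsubseteq f^{-1}(V^i_{E'})$. Using the standard identity $f^{-1}(A\bigsqcap B)=f^{-1}(A)\bigsqcap f^{-1}(B)$, I get $U^1_E\bigsqcap U^2_E\sqsubseteq f^{-1}(V^1_{E'})\bigsqcap f^{-1}(V^2_{E'})=f^{-1}(V^1_{E'}\bigsqcap V^2_{E'})=f^{-1}(\Phi_{E'})=\Phi_E$. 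Thus $U^1_E$ and $U^2_E$ are disjoint non-null soft open sets over the hyperconnected space $X$, the desired contradiction; hence $(Y,\S,E')$ is soft hyperconnected.

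An alternative, more in the style of Theorem~\ref{separable}, is to use the reformulation that a space is soft hyperconnected iff every non-null soft open set is soft dense. One would take a non-null soft open $V_{E'}$ over $Y$, observe its preimage is non-null soft $sw$-open hence contains a non-null soft open set, which is soft dense in the hyperconnected $X$, so $f^{-1}(V_{E'})$ is soft dense over $X$; then Theorem~\ref{characterization1 sw-cont}(4) sends it to a soft dense subset of $f(X)=Y$, and since $f$ is onto $f(f^{-1}(V_{E'}))=V_{E'}$, giving density of $V_{E'}$. Closing this route additionally needs Lemma~\ref{closure} to recover that two non-null soft open sets, one of them dense, must meet. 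In either approach the only genuine subtlety --- the main obstacle --- is the careful bookkeeping that surjectivity forces preimages of non-null sets to be non-null and that $f^{-1}$ respects soft intersections; everything else is routine.
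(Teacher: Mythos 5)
Your main argument is correct and is essentially the paper's own proof in contrapositive form: both hinge on surjectivity forcing non-null preimages, soft $sw$-continuity supplying non-null soft open sets inside those preimages (equivalently $\Int(f^{-1}(V^i_{E'}))\ne\Phi_E$), and soft hyperconnectedness of $(X, \T, E)$ forcing those open sets to intersect. The paper simply runs it directly---intersecting $\Int(f^{-1}(G_{E'}))$ and $\Int(f^{-1}(H_{E'}))$ and pushing a soft point forward to land in $G_{E'}\bigsqcap H_{E'}$---rather than deriving a contradiction, but the mathematical content is the same.
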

	\begin{proof}
		Let $G_{E'}, H_{E'}$ be any two soft open sets over $Y$ with $G_{E'}\ne\Phi_{E'}\ne H_{E'}$. Since $f$ is soft $sw$-continuous, then $\Int(f^{-1}(G_{E'}))\ne\Phi_E\ne\Int(f^{-1}(H_{E'}))$. But $(X, \T, E)$ is soft hyperconnected, so $$\Int(f^{-1}(G_{E'}))\bigsqcap\Int(f^{-1}(H_{E'}))\ne\Phi_E.$$ If $$x\in\Int(f^{-1}(G_{E'}))\bigsqcap\Int(f^{-1}(H_{E'}))\sqsubseteq f^{-1}(G_{E'})\bigsqcap f^{-1}(H_{E'}),$$ then $f(x)\in G_{E'}\bigsqcap H_{E'}$. Thus $(Y, \S, E')$ is soft hyperconnected.
	\end{proof}

	\section{Soft Somewhat Open Functions}\label{se5}\
	
	In this section, we formulate the concepts of soft somewhat open functions (briefly soft $sw$-open) and study its main properties. We characterized it using soft closed and soft dense sets.
	\begin{definition}
		Let $(X, \T, E)$ and $(Y, \S, E')$ be soft topological spaces. A function \sfxy is soft $sw$-open if for each soft open set  $U_E$ over $X$, $f(U_E)$ is soft $sw$-open over $Y$.
	\end{definition}
	
	\begin{remark}
		Let $(X, \T, E)$ and $(Y, \S, E')$ be soft topological spaces. A function \sfxy is soft $sw$-open iff for each non-null soft open set  $U_E$ over $X$, there exits a non-null soft $sw$-open set $V_{E'}$ over $Y$ such that $V_{E'}\sqsubseteq f(U_E)$.	
	\end{remark}
	
	For a single soft point, we have
	\begin{proposition}
		Let $(X, \T, E)$ and $(Y, \S, E')$ be soft topological spaces. A function \sfxy is soft $sw$-open at $P^x_e\in X_E$ if for each soft open set  $U_E$ over $X$ containing $P^x_e$, there exits a soft $sw$-open set $V_{E'}$ over $Y$ such that $f(P^x_e)\in V_{E'}\sqsubseteq f(U_E)$.
	\end{proposition}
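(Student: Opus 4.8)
The plan is to read this Proposition as the local counterpart of the global characterisation stated in the preceding Remark, i.e. to prove that $f$ is soft $sw$-open (in the sense of the section's definition) if and only if it is soft $sw$-open at each soft point $P^x_e\in X_E$, where the latter means exactly the displayed pointwise condition. The only machinery I would need is the monotonicity of the soft interior operator $\Int$ (as already used in Proposition~\ref{union-sw}) together with Definition~\ref{defn1} and Remark~\ref{defn-sw}(a), which say that a non-null soft set is soft $sw$-open precisely when it has non-null soft interior, equivalently when it contains a non-null soft open set.

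For the forward implication I would argue directly and observe that it is essentially trivial. Assume $f$ is soft $sw$-open and fix $P^x_e\in X_E$. Given any soft open set $U_E$ over $X$ with $P^x_e\in U_E$, the image $f(U_E)$ is soft $sw$-open over $Y$ by hypothesis and is non-null because $f(P^x_e)\in f(U_E)$. Hence $V_{E'}:=f(U_E)$ is itself a soft $sw$-open witness satisfying $f(P^x_e)\in V_{E'}\sqsubseteq f(U_E)$, so $f$ is soft $sw$-open at $P^x_e$.

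For the converse I would localise and then patch. Suppose $f$ is soft $sw$-open at every $P^x_e\in X_E$ and let $U_E$ be a non-null soft open set over $X$ (the case $U_E=\Phi_E$ being immediate, since $f(\Phi_E)=\Phi_{E'}$ is soft $sw$-open by convention). Choose any $P^x_e\in U_E$. By the pointwise hypothesis there is a soft $sw$-open set $V_{E'}$ over $Y$ with $f(P^x_e)\in V_{E'}\sqsubseteq f(U_E)$; in particular $V_{E'}$ is non-null, so $\Int(V_{E'})\neq\Phi_E$ by Definition~\ref{defn1}. Monotonicity of $\Int$ applied to $V_{E'}\sqsubseteq f(U_E)$ then yields $\Int(f(U_E))\sqsupseteq\Int(V_{E'})\neq\Phi_E$, whence $f(U_E)$ is soft $sw$-open over $Y$. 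As $U_E$ was an arbitrary soft open set, $f$ is soft $sw$-open.

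I do not anticipate a genuine obstacle here; the argument is a routine local-to-global patching. The only point demanding care is the bookkeeping of the null set: one must dispose of $U_E=\Phi_E$ separately, and, crucially, must check that the witness $V_{E'}$ is \emph{non-null} so that Definition~\ref{defn1} forces $\Int(V_{E'})\neq\Phi_E$ rather than falling into the trivial null branch. Everything else reduces to a single application of the monotonicity of the soft interior operator.
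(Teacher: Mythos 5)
Your proof is correct, and in fact it does more than the paper does: the paper states this proposition with no proof whatsoever, because ``soft $sw$-open at $P^x_e$'' is defined nowhere else in the text, so the statement really functions as a pointwise \emph{definition} (or, at most, as the trivial ``if'' direction, which the authors evidently regarded as immediate from Remark~\ref{defn-sw}). You instead read it as a full local-to-global equivalence --- $f$ is soft $sw$-open iff it is soft $sw$-open at every soft point --- and proved both directions, which is a strictly stronger and more informative result. Both halves of your argument are sound: in the forward direction, taking $V_{E'}:=f(U_E)$ itself as the witness is legitimate since $f(U_E)$ is soft $sw$-open by hypothesis and contains $f(P^x_e)$; in the converse, fixing a soft point inside a non-null soft open $U_E$, noting that the witness $V_{E'}$ is non-null precisely because $f(P^x_e)\in V_{E'}$ (so Definition~\ref{defn1} forces $\Int(V_{E'})\neq\Phi_{E'}$ rather than the null branch), and then concluding $\Int(f(U_E))\sqsupseteq\Int(V_{E'})\neq\Phi_{E'}$ by monotonicity of $\Int$ is exactly right. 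Equivalently, that last step could be replaced by a single citation of the paper's proposition that every superset of a (non-null) soft $sw$-open set is soft $sw$-open, applied to $V_{E'}\sqsubseteq f(U_E)$; this buys nothing logically but matches the paper's toolkit more closely. Your explicit care with the null case $U_E=\Phi_E$ is appropriate and is the only place the argument could silently go wrong. The sole blemish is notational: you wrote $\Int(V_{E'})\neq\Phi_E$, which should read $\Int(V_{E'})\neq\Phi_{E'}$, since $V_{E'}$ is a soft set over $Y$ with parameter set $E'$.
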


	From \cite[Proposition 4.7]{shami2020sd-cont}, Lemma \ref{semi=int not 0} and Remark \ref{w-relation sets}, one can obtain the following for functions:
	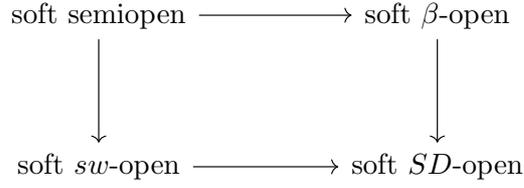
\begin{figure}[H]
		\centering
		\begin{tikzcd}[arrows=rightarrow]
			&\text{soft semiopen}\arrow[dd]\arrow[rr]&& \text{soft $\beta$-open}\arrow[dd]\\
			&&&\\
			&\text{soft $sw$-open}\arrow[rr]&&\text{soft $SD$-open}
		\end{tikzcd}
		\caption*{\quad Diagram III: Relationship between some generalizations of soft openness}\label{D3}
	\end{figure}
	
	None of the implications in the above diagram is reversible and counterexamples are not difficult to obtain.

	%The inverse of the function $f$ given in Example \ref{sw-not semicont} is soft $sw$-open but not soft semiopen.

	\begin{proposition}
		Let $(X, \T, E)$ and $(Y, \S, E')$ be soft topological spaces. For a function \sfxy, the following  are equivalent:
		\begin{enumerate}[(1)]
			\item $f$ is soft $sw$-open,
			\item $f(\Int(G_E))\sqsubseteq \Int_{sw}(f(G_E))$, for each set  $G_E$ over $X$,
			\item $f^{-1}(\Cl_{sw}(H_{E'}))\sqsubseteq \Cl(f^{-1}(H_{E'}))$, for each set  $H_{E'}$ over $Y$.
		\end{enumerate}
	\end{proposition}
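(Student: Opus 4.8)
The plan is to prove the two equivalences $(1)\Leftrightarrow(2)$ and $(2)\Leftrightarrow(3)$ separately. Throughout I would rely on four elementary facts. First, by Proposition~\ref{union-sw} any union of soft $sw$-open sets is soft $sw$-open, so $\Int_{sw}(G_E)$ is itself soft $sw$-open and is in fact the largest soft $sw$-open set contained in $G_E$; dually, $\Cl_{sw}(H_{E'})$ is the smallest soft $sw$-closed set containing $H_{E'}$. Second, both operators are clearly monotone and satisfy $\Int_{sw}(G_E)\sqsubseteq G_E\sqsubseteq\Cl_{sw}(G_E)$. Third, complementation commutes with $f^{-1}$ and swaps the soft $sw$-operators, i.e. $f^{-1}(B^c_{E'})=(f^{-1}(B_{E'}))^c$ and $\Int_{sw}(B^c_{E'})=(\Cl_{sw}(B_{E'}))^c$, while the interior--closure complement lemma gives $\Cl(A^c_E)=(\Int(A_E))^c$. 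Fourth, I would use the standard inclusions $f(f^{-1}(B_{E'}))\sqsubseteq B_{E'}$ and $A_E\sqsubseteq f^{-1}(f(A_E))$.

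For $(1)\Rightarrow(2)$: given $G_E$ over $X$, the set $\Int(G_E)$ is soft open, so $f(\Int(G_E))$ is soft $sw$-open by hypothesis; since it also lies inside $f(G_E)$, it is one of the sets whose union defines $\Int_{sw}(f(G_E))$, whence $f(\Int(G_E))\sqsubseteq\Int_{sw}(f(G_E))$. For $(2)\Rightarrow(1)$: if $U_E$ is soft open then $\Int(U_E)=U_E$, so $(2)$ yields $f(U_E)\sqsubseteq\Int_{sw}(f(U_E))\sqsubseteq f(U_E)$; thus $f(U_E)=\Int_{sw}(f(U_E))$ is soft $sw$-open, i.e. $f$ is soft $sw$-open.

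The equivalence $(2)\Leftrightarrow(3)$ is the main obstacle, precisely because $f$ need not commute with complements. The trick I would use is to route every complement through $f^{-1}$. Writing $K_{E'}=H^c_{E'}$ and applying the dualities of the third fact, condition $(3)$ transforms (after taking complements, which reverses the inclusion) into the equivalent interior statement
\[
(3')\qquad \Int(f^{-1}(K_{E'}))\sqsubseteq f^{-1}(\Int_{sw}(K_{E'}))\quad\text{for every } K_{E'}\text{ over }Y.
\]
It then remains to prove $(2)\Leftrightarrow(3')$ by the usual adjunction argument. For $(2)\Rightarrow(3')$, take $G_E=f^{-1}(K_{E'})$ in $(2)$ and use $f(f^{-1}(K_{E'}))\sqsubseteq K_{E'}$ together with monotonicity of $\Int_{sw}$ to get $f(\Int(f^{-1}(K_{E'})))\sqsubseteq\Int_{sw}(K_{E'})$, then apply $f^{-1}$ and $A_E\sqsubseteq f^{-1}(f(A_E))$. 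For $(3')\Rightarrow(2)$, take $K_{E'}=f(G_E)$ in $(3')$, use $G_E\sqsubseteq f^{-1}(f(G_E))$ and monotonicity of $\Int$, then apply $f$ and $f(f^{-1}(\cdot))\sqsubseteq(\cdot)$. Unwinding the reduction $(3)\Leftrightarrow(3')$ completes the cycle. The only place demanding care is this reduction, where one must keep straight which complement duality applies on the $X$-side (the ordinary $\Int$/$\Cl$ lemma) versus the $Y$-side (the $\Int_{sw}$/$\Cl_{sw}$ duality).
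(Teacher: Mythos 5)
Your proof is correct, and since the paper's own proof of this proposition consists of the single word ``Standard,'' your argument supplies exactly the standard reasoning being alluded to: $(1)\Leftrightarrow(2)$ by the largest-soft-$sw$-open-subset characterization of $\Int_{sw}$, and $(2)\Leftrightarrow(3)$ by complement duality plus the adjunction inclusions $f(f^{-1}(B_{E'}))\sqsubseteq B_{E'}$ and $A_E\sqsubseteq f^{-1}(f(A_E))$. You also handle the one genuinely delicate point correctly, namely routing every complement through $f^{-1}$ (which, unlike $f$, commutes with soft complements unconditionally) and matching the ordinary $\Int$/$\Cl$ duality on the $X$-side with the $\Int_{sw}$/$\Cl_{sw}$ duality on the $Y$-side.
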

	\begin{proof}
		Standard.
	\end{proof}

	\begin{theorem}
		Let $(X, \T, E)$ and $(Y, \S, E')$ be soft topological spaces and let $G_E$ be a soft open subspace over $X$. If \sfxy is soft $sw$-open over $X$, then $f|_{G_E}$ is $sw$-open over $G$.
	\end{theorem}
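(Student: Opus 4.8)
The plan is to reduce the openness of the restriction to the $sw$-openness of $f$ on all of $X$, by exploiting the fact that soft open subsets of the \emph{open} subspace $G$ remain soft open in $X$. First I would take an arbitrary soft open set $V_E$ over the subspace $G$; by the definition of the relative topology $\T_G$, there is some $W_E\in\T$ with $V_E=W_E\bigsqcap G_E$. Since $G_E$ is itself soft open over $X$, the finite-intersection axiom (c2) for a soft topology gives that $W_E\bigsqcap G_E$ is soft open over $X$, so $V_E$ is in fact a soft open set over $X$.

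Next I would invoke the hypothesis directly. Because $f$ is soft $sw$-open over $X$ and $V_E$ is soft open over $X$, the image $f(V_E)$ is soft $sw$-open over $Y$. The remaining step is to identify the image of $V_E$ under the restriction with $f(V_E)$: since $V_E\sqsubseteq G_E$, we have $\left(f|_{G_E}\right)(V_E)=f(V_E)$, and therefore $\left(f|_{G_E}\right)(V_E)$ is soft $sw$-open over $Y$. As $V_E$ was an arbitrary soft open set over $G$, this establishes that $f|_{G_E}$ is soft $sw$-open over $G$.

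The argument is almost entirely formal, and I do not anticipate a serious obstacle. The only point requiring care is the first step, namely verifying that the relative-topology sets over $G$ are genuinely soft open over $X$; this is precisely where the openness of $G_E$ is used. Indeed, were $G_E$ merely soft dense (as in the cautionary examples accompanying Lemma~\ref{sw(Y)>>sw(X)}), a soft open set of the subspace need not be soft open in $X$, and the reduction would collapse. Thus the openness hypothesis on $G_E$ is essential rather than decorative, and I would emphasize it as the crux on which the short proof turns.
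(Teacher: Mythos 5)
Your proof is correct and takes essentially the same route as the paper's: the entire argument is that a soft open set over the open subspace $G$ is soft open over $X$ (this is exactly where the openness of $G_E$ is used), after which the hypothesis applies directly and $\left(f|_{G_E}\right)(V_E)=f(V_E)$ is soft $sw$-open over $Y$. The paper compresses this into two lines; your version only spells out the relative-topology decomposition $V_E=W_E\bigsqcap G_E$ and the identification of the restricted image, which the paper leaves implicit.
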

	\begin{proof}
		If $U_E$ is any soft open over $G_E$, then $U_E$ is also soft open over $X$ because $G_E$ is soft open. By assumption, $f(U_E)$ is soft $sw$-open and hence the result.
	\end{proof}

	\begin{theorem}
		Let $(X, \T, E)$ and $(Y, \S, E')$ be soft topological spaces and let $D_E$ be a soft dense subspace over $X$. If $f:(D, \T_D, E)\to (Y, \S, E')$ is a soft $sw$-open function, then each extension of $f$ is soft $sw$-open over $X$.
	\end{theorem}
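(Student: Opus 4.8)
The plan is to reduce soft $sw$-openness of an arbitrary extension $g$ of $f$ to the already-known soft $sw$-openness of $f$ on the dense subspace $D$, by transporting each soft open set of $X$ into $\T_D$ via intersection with $D_E$. Since the case of a null image is trivial, it suffices to fix a non-null soft open set $U_E$ over $X$ and prove that $g(U_E)$ is soft $sw$-open over $Y$.

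First I would record the density input. Because $D_E$ is soft dense, $\Cl(D_E)=X_E$, and this forces $U_E\bigsqcap D_E\ne\Phi_E$: indeed, were the intersection null, then $D_E\sqsubseteq X_E\setminus U_E$ with $X_E\setminus U_E$ a proper soft closed set, so $\Cl(D_E)\sqsubseteq X_E\setminus U_E\ne X_E$, a contradiction. Moreover $U_E\bigsqcap D_E\in\T_D$ by the very definition of the soft relative topology, so $U_E\bigsqcap D_E$ is a \emph{non-null soft open} set over $D$. Applying the hypothesis that $f:(D,\T_D,E)\to(Y,\S,E')$ is soft $sw$-open, $f(U_E\bigsqcap D_E)$ is a soft $sw$-open set over $Y$, and it is non-null since the image of a non-null soft set is non-null.

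Next I would invoke that $g$ extends $f$. Since $U_E\bigsqcap D_E\sqsubseteq D_E$ and $g$ coincides with $f$ on $D_E$, the two images agree, $f(U_E\bigsqcap D_E)=g(U_E\bigsqcap D_E)$, while monotonicity of images under $\sqsubseteq$ gives $f(U_E\bigsqcap D_E)\sqsubseteq g(U_E)$. As $f(U_E\bigsqcap D_E)$ is a non-null soft $sw$-open set, $\Phi_E\ne\Int(f(U_E\bigsqcap D_E))\sqsubseteq\Int(g(U_E))$ by monotonicity of the soft interior, so $\Int(g(U_E))\ne\Phi_E$; that is, $g(U_E)$ is soft $sw$-open over $Y$. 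This last step is precisely the proposition that a superset of a non-null soft $sw$-open set is soft $sw$-open. Since $U_E$ was an arbitrary soft open set over $X$, it follows that $g$ is soft $sw$-open.

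The routine parts are the subspace identity $U_E\bigsqcap D_E\in\T_D$ and the monotonicity of images and of the interior. The point that must be handled carefully is the nonemptiness of $U_E\bigsqcap D_E$, which is exactly where soft density is used and which guarantees that a genuinely non-null soft open set is fed into $f$; this non-nullness is essential, because the superset principle applies only to non-null soft $sw$-open sets (an arbitrary superset of $\Phi_E$ need not be soft $sw$-open). One could alternatively cite Lemma~\ref{intersectionwithdense} to see that $U_E\bigsqcap D_E$ is soft $sw$-open over $D$, but here the stronger fact that it is actually soft open over $D$ is what the definition of a soft $sw$-open function requires, so I would use that directly.
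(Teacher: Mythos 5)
Your proof is correct and follows essentially the same route as the paper's: intersect $U_E$ with $D_E$, use soft density to see that $U_E\bigsqcap D_E$ is a non-null soft open set over $D$, apply the soft $sw$-openness of $f$, note $f(U_E\bigsqcap D_E)=g(U_E\bigsqcap D_E)\sqsubseteq g(U_E)$, and conclude by the superset principle for soft $sw$-open sets. The only difference is that you spell out the density argument and the non-nullness caveat explicitly, which the paper leaves implicit.
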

	\begin{proof}
		Let $g$ be any extension of $f$ and let $U_{E}$ be a soft open set over $X$. Since $D_E$ is soft dense over $X$, so $U_E\bigsqcap D_E$ is a non-null soft open set over $D_E$. By assumption, there exists a non-null soft $sw$-open set $V_{E'}$ over $Y$ such that $V_{E'}\sqsubseteq f(U_E\bigsqcap D_E)=g(U_E\bigsqcap D_E)\sqsubseteq g(U_E)$. Thus $g$ is soft $sw$-open over $X$.
	\end{proof}

	\begin{theorem}
		Let $(X, \T, E)$ and $(Y, \S, E')$ be soft topological spaces. Let \sfxy be a function and $\{G_E^\alpha: \alpha\in\Lambda\}$ be any soft cover over $X$. Then $f$ is soft $sw$-open, if $f|_{G_E^\alpha}$ is soft $sw$-open for each $\alpha\in\Lambda$.
	\end{theorem}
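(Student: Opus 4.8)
The plan is to exploit the fact that, for a function, soft $sw$-openness is a statement about images landing in the codomain $Y$, so unlike the soft $sw$-continuous analogue it requires no transfer of soft $sw$-open sets back into $X$. Concretely, to show $f$ is soft $sw$-open I would take an arbitrary soft open set $U_E$ over $X$ and verify that $f(U_E)$ is soft $sw$-open over $Y$.

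First I would slice $U_E$ along the cover. Since $\{G_E^\alpha:\alpha\in\Lambda\}$ is a soft cover of $X$, we have $\bigsqcup_{\alpha\in\Lambda} G_E^\alpha = X_E$, and distributing the soft intersection over the soft union gives
$$U_E = U_E\bigsqcap X_E = U_E\bigsqcap\Big(\bigsqcup_{\alpha\in\Lambda} G_E^\alpha\Big)=\bigsqcup_{\alpha\in\Lambda}\big(U_E\bigsqcap G_E^\alpha\big).$$
Each piece $U_E\bigsqcap G_E^\alpha$ is, by definition of the soft relative topology $\T_{G^\alpha}$, a soft open set over the subspace $G^\alpha$.

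Next I would invoke the hypothesis. Because $f|_{G_E^\alpha}$ is soft $sw$-open and $U_E\bigsqcap G_E^\alpha$ is soft open over $G^\alpha$, its image $f(U_E\bigsqcap G_E^\alpha)=\big(f|_{G_E^\alpha}\big)(U_E\bigsqcap G_E^\alpha)$ is soft $sw$-open over $Y$. Since the soft image preserves soft unions,
$$f(U_E)=f\Big(\bigsqcup_{\alpha\in\Lambda}\big(U_E\bigsqcap G_E^\alpha\big)\Big)=\bigsqcup_{\alpha\in\Lambda} f\big(U_E\bigsqcap G_E^\alpha\big),$$
which exhibits $f(U_E)$ as a soft union of soft $sw$-open sets. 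By Proposition \ref{union-sw}, $f(U_E)$ is soft $sw$-open over $Y$, and hence $f$ is soft $sw$-open.

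I do not expect a genuine obstacle here; the only point worth flagging is the contrast with the corresponding soft $sw$-continuity theorem. There the cover had to be soft open so that Lemma \ref{sw(Y)>>sw(X)} could lift the soft $sw$-open preimages from each subspace back up to $X$. In the present statement no such lifting is needed---the relevant soft $sw$-open sets already live in the codomain $Y$---so an arbitrary soft cover suffices, and the argument rests only on distributivity of soft intersection over soft union, preservation of soft unions under images, and the closure of soft $sw$-open sets under soft unions (Proposition \ref{union-sw}).
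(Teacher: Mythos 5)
Your proposal is correct and follows essentially the same route as the paper: both slice $U_E$ as $\bigsqcup_{\alpha\in\Lambda}\bigl(U_E\bigsqcap G_E^\alpha\bigr)$, apply the hypothesis to each piece to get soft $sw$-open images over $Y$, and conclude from Proposition \ref{union-sw} that $f(U_E)$ is soft $sw$-open. Your flagged contrast with the continuity analogue (where Lemma \ref{sw(Y)>>sw(X)} forces the cover to be soft open) is exactly why the paper can state this theorem for an arbitrary soft cover, and your handling is in fact slightly cleaner, since the paper's claim that each $U_E\bigsqcap G_E^\alpha$ is non-null is unnecessary (and not always true), while your argument works regardless because null sets are soft $sw$-open by definition.
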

	\begin{proof}
		Let $U_{E}$ be a (non-null) soft open set over $X$. Then $U_E\bigsqcap G_E^\alpha$ is a non-null soft open set in $G_E^\alpha$ for each $\alpha$. By assumption, $f\left(U_E\bigsqcap G_E^\alpha\right)$ is a soft $sw$-open set over $Y$. But $$f(U_{E})=\bigsqcup f\left(U_E\bigsqcap G_E^\alpha\right),$$ which a union of soft $sw$-open sets and by Lemma \ref{union-sw}, $f(U_{E})$ is a soft $sw$-open set over $Y$. Hence $f$ is soft $sw$-open.
	\end{proof}

	\begin{theorem}\label{characterization1 sw-open}
		Let $(X, \T, E)$ and $(Y, \S, E')$ be soft topological spaces. For a one to one function $f$ from $(X, \T, E)$ onto $(Y, \S, E')$, the following  are equivalent:
		\begin{enumerate}[(1)]
			\item $f$ is soft $sw$-open,
			\item for each soft closed set  $F_{E}$ over $X$ with $f(F_{E})\neq Y_{E'}$, there exists a proper soft closed $K_{E'}$ over $Y$ such that $f(F_{E})\sqsubseteq K_{E'}$.
		\end{enumerate}
	\end{theorem}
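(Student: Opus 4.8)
The plan is to prove the two implications separately, using Remark~\ref{defn-sw} to pass between the ``soft $sw$-open/closed'' conditions and the existence of genuine soft open subsets (resp.\ soft closed supersets), while exploiting that $f$ is a bijection so that images of complements behave like complements of images, i.e.\ $f(X_E\setminus A_E)=Y_{E'}\setminus f(A_E)$ for every soft subset $A_E$ over $X$.

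For (1)$\implies$(2), I would start with a soft closed set $F_E$ over $X$ satisfying $f(F_E)\ne Y_{E'}$ and put $U_E=X_E\setminus F_E$, which is soft open. Since $f$ is a bijection, $f(U_E)=Y_{E'}\setminus f(F_E)\ne\Phi_{E'}$, so $f(U_E)$ is a non-null soft $sw$-open set by hypothesis. By Remark~\ref{defn-sw}(a) there is a soft open set $V_{E'}$ with $\Phi_{E'}\ne V_{E'}\sqsubseteq f(U_E)=Y_{E'}\setminus f(F_E)$. Taking complements yields $f(F_E)\sqsubseteq Y_{E'}\setminus V_{E'}$, and $K_{E'}:=Y_{E'}\setminus V_{E'}$ is a proper soft closed set (proper because $V_{E'}\ne\Phi_{E'}$), which is exactly (2).

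For (2)$\implies$(1), I would take an arbitrary soft open set $U_E$ over $X$. The case $U_E=\Phi_E$ is trivial, since then $f(U_E)=\Phi_{E'}$ is soft $sw$-open, so assume $U_E\ne\Phi_E$ and set $F_E=X_E\setminus U_E$, a soft closed set. The key bookkeeping step is to verify the hypothesis of (2): because $f$ is a bijection, $f(F_E)=Y_{E'}\setminus f(U_E)$, and $U_E\ne\Phi_E$ forces $f(U_E)\ne\Phi_{E'}$, hence $f(F_E)\ne Y_{E'}$. Applying (2) produces a proper soft closed $K_{E'}$ with $f(F_E)\sqsubseteq K_{E'}\ne Y_{E'}$. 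Complementing gives $\Phi_{E'}\ne Y_{E'}\setminus K_{E'}\sqsubseteq Y_{E'}\setminus f(F_E)=f(U_E)$, so by Remark~\ref{defn-sw}(a) the set $f(U_E)$ is soft $sw$-open, and therefore $f$ is soft $sw$-open.

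The routine parts are the complementation identities. The one place that genuinely uses both injectivity and surjectivity, and is the main thing to get right, is the identity $f(X_E\setminus A_E)=Y_{E'}\setminus f(A_E)$ together with the equivalence $U_E\ne\Phi_E\iff f(F_E)\ne Y_{E'}$; this is precisely what lines up the non-nullity of $U_E$ with the properness condition needed to invoke (2), so I would state it explicitly before carrying out either direction.
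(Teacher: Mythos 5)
Your proof is correct and follows essentially the same route as the paper: pass to the complement $X_E\setminus F_E$ (resp.\ $X_E\setminus U_E$), use the bijection identity $f(X_E\setminus A_E)=Y_{E'}\setminus f(A_E)$, and translate between non-null soft open subsets and proper soft closed supersets via Remark~\ref{defn-sw}. The only difference is cosmetic: the paper dismisses (2)$\implies$(1) with ``reverse the above steps,'' whereas you carry it out explicitly, including the bookkeeping that $U_E\neq\Phi_E$ forces $f(F_E)\neq Y_{E'}$ --- a worthwhile addition, since that is exactly where surjectivity is needed.
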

	\begin{proof}
		(1)$\implies$(2) Let $F_E$ be a soft closed over $X$ with $f(F_{E})\neq Y_{E'}$. This implies $X_E\setminus F_E$ is a non-null soft open set over $X$. By (1), there exists a soft open set $H_{E'}$ over $Y$ such that $\Phi_{E'}\ne H_{E'}\sqsubseteq f\left(X_E\setminus F_E\right)$. Therefore $f(F_E)=Y_{E'}\setminus(f(X_E\setminus F_E))\sqsubseteq Y_{E'}\setminus H_{E'}$. Set $K_{E'}=Y_{E'}\setminus H_{E'}$. So $K_{E'}$ is a soft closed set over $Y$ that satisfies the required property.
		
		(2)$\implies$(1) Reverse the above steps.
	\end{proof}

	\begin{theorem}\label{characterization2 sw-open}
		Let $(X, \T, E)$ and $(Y, \S, E')$ be soft topological spaces. For a function $f$ from $(X, \T, E)$ onto $(Y, \S, E')$, the following  are equivalent:
		\begin{enumerate}[(1)]
			\item $f$ is soft $sw$-open,
			\item for each soft dense set  $D_{E'}$ over $Y$, then $f^{-1}(D_{E'})$ is soft dense over $X$.
		\end{enumerate}
	\end{theorem}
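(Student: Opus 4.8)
The plan is to prove the two implications (1)$\implies$(2) and (2)$\implies$(1) separately, each by contradiction, trading density for co-density through the complement identities $\Int(G_E^c)=(\Cl(G_E))^c$ and $\Cl(G_E^c)=(\Int(G_E))^c$ recorded in Section~\ref{se2}. The elementary facts I will lean on are that $f(f^{-1}(A_{E'}))\sqsubseteq A_{E'}$ and $U_E\sqsubseteq f^{-1}(f(U_E))$ for any soft sets, that a non-null soft set has non-null image, that $\Cl$ is monotone (so every superset of a soft dense set is soft dense), and that, since $f$ is onto, soft density over $Y$ coincides with density over $f(X)$, exactly as in the companion result Theorem~\ref{characterization1 sw-cont}.

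First I would handle (1)$\implies$(2). Let $D_{E'}$ be soft dense over $Y$ and suppose, for contradiction, that $f^{-1}(D_{E'})$ fails to be soft dense over $X$. By the complement identity, $\Int\big((f^{-1}(D_{E'}))^c\big)=\Int\big(f^{-1}(D_{E'}^c)\big)\ne\Phi_E$, so there is a non-null soft open $U_E\sqsubseteq f^{-1}(D_{E'}^c)$. Applying soft $sw$-openness to $U_E$ makes $f(U_E)$ a non-null soft $sw$-open set, whence $\Int(f(U_E))\ne\Phi_E$. Since $f(U_E)\sqsubseteq f(f^{-1}(D_{E'}^c))\sqsubseteq D_{E'}^c$, this gives $\Int(D_{E'}^c)\ne\Phi_E$, i.e.\ $\Cl(D_{E'})\ne Y_{E'}$, contradicting the density of $D_{E'}$.

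Next, for (2)$\implies$(1), let $U_E$ be a non-null soft open set over $X$; since $f(U_E)$ is then non-null it suffices to show $\Int(f(U_E))\ne\Phi_E$. If instead $\Int(f(U_E))=\Phi_E$, then $Y_{E'}\setminus f(U_E)$ is soft dense over $Y$, so by (2) the preimage $f^{-1}(Y_{E'}\setminus f(U_E))=X_E\setminus f^{-1}(f(U_E))$ is soft dense over $X$. From $U_E\sqsubseteq f^{-1}(f(U_E))$ we get $X_E\setminus f^{-1}(f(U_E))\sqsubseteq X_E\setminus U_E$, and monotonicity of $\Cl$ then forces $X_E\setminus U_E$ to be soft dense as well; equivalently $\Int(U_E)=\Phi_E$, which is impossible for a non-null soft open set. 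Hence $f(U_E)$ is soft $sw$-open and $f$ is soft $sw$-open.

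I do not anticipate a serious obstacle here: both directions are short once the complement bookkeeping is set up, and the only point I would double-check is that the inclusions $f(f^{-1}(\cdot))\sqsubseteq(\cdot)$ and $(\cdot)\sqsubseteq f^{-1}(f(\cdot))$ are applied in the correct direction, together with the null/non-null housekeeping that lets ``soft $sw$-open'' collapse to ``non-null interior'' for the image of a non-null soft open set.
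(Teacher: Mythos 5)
Your proof is correct and takes essentially the same approach as the paper's: both directions run by contradiction using the inclusions $f(f^{-1}(\cdot))\sqsubseteq(\cdot)$ and $U_E\sqsubseteq f^{-1}(f(U_E))$, with (2)$\implies$(1) virtually identical. The only cosmetic difference is in (1)$\implies$(2), where you extract the non-null soft open set inside $f^{-1}(D_{E'}^c)$ directly via the complement identity $\Int(G_E^c)=(\Cl(G_E))^c$, while the paper routes the same step through a proper soft closed set $K_E$ containing the non-dense preimage.
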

	\begin{proof}
		(1)$\implies$(2) Let $D_{E'}$ be a soft dense set over $Y$. Suppose otherwise that $f^{-1}(D_{E'})$ is not soft dense over $X$. Then there is a soft closed $K_E$ over $X$ such that $f^{-1}(D_{E'})\sqsubset K_E\ne X_E$. But $X_E\setminus K_E$ is soft open over $X$ so, by (1), there exists a soft open set $V_{E'}$ over $Y$ such that $\Phi_{E'}\ne V_{E'}\sqsubseteq f(X_E\setminus K_E)$. Therefore $V_{E'}\sqsubseteq f(X_E\setminus K_E)\sqsubseteq f(f^{-1}(Y_{E'}\setminus D_{E'}))\sqsubseteq Y_{E'}\setminus D_{E'}$. Thus $D_{E'}\sqsubseteq Y_{E'}\setminus V_{E'}\ne\Phi_{E'}$. But $Y_{E'}\setminus V_{E'}$ is soft closed over $Y$ which violates the soft density of $D_{E'}$ over $Y$. Hence $f^{-1}(D_{E'})$ must be soft dense over $X$.
		
		(2)$\implies$(1) W.l.o.g, let $U_E$ be a non-null soft open set over $X$. We need to prove that $\Int_Y(f(U_E))\ne\Phi_{E'}$. Assume $\Int_Y(f(U_E))=\Phi_{E'}$. Then $\Cl_Y(Y_{E'}\setminus f(U_E))=Y_{E'}$. By (2), $\Cl_X\left(f^{-1}\left(Y_{E'}\setminus f(U_E)\right)\right)=X_E$. But $f^{-1}\left(Y_{E'}\setminus f(U_E)\right)\sqsubseteq X_E\setminus U_E$ and $X_E\setminus U_E$ is soft closed over $X$. Therefore $X_E=\Cl_X(f^{-1}\left(Y\setminus f(U_E)\right))\sqsubseteq X_E\setminus U_E$. This means that $U_E=\Phi_E$, which is contradiction. Thus $\Int_Y(f(U_E))\ne\Phi_{E'}$ and hence $f$ is soft $sw$-open.
	\end{proof}
	
	In the rest of this section, we define an $sw$-homeomorphism and show some soft topological properties which do not keep by soft $sw$-homeomorphisms

	A soft one to one function $f$ from $(X, \T, E)$ onto $(Y, \S, E')$ is called $sw$-homeomorphism if it is soft $sw$-continuous and soft $sw$-open. One can easily conclude that each homeomorphism is $sw$-homeomorphism but not the converse. Evidently, if $f$ is soft $sw$-homeomorphism from $(X, \T, E)$ onto $(Y, \S, E')$, $f^{-1}$ is $sw$-open.
	
	It is worth stating that soft $sw$-homeomorphism does not preserve interesting soft topological properties, as showing in the following examples.
	
	\begin{example}\label{ex2}
		Let $X=Y=\mathbb{R}$ be the set of real numbers and let $E=\{e\}$ be a set of parameters. If $\T$ is the soft topology on $X$ generated by  $\{(e, B(e)):B(e)=(a, b); a, b\in\mathbb{R}; a<b\}$ and  $\S$ is the soft topology on $Y$ generated by  $\{(e, B(e)):B(e)=[a, b); a, b\in\mathbb{R}; a<b\}$ (called soft Sorgenfrey line), then the identity function $i:(X, \T, E)\to(Y, \S, E)$ is soft $sw$-homeomorphism and $(X, \T, E)$ is soft metrizable, soft locally compact and soft connected, while $(Y, \S, E)$ does not have any of these properties.
		
		If we take $A=[0,1]$, then $i|_{A_E}$ is soft $sw$-homeomorphism and $(A, \T_A, E)$ is soft compact, but $(A, \S_A, E)$ is not.
	\end{example}

	\begin{example}\label{ex3}
		Consider $X, E$ and $\T$ given in Example \ref{ex2}. Let $\sigma=\{\Phi_E, X_E, \T\setminus\{G_E:G_E\in\T, (e,{0})\text{ or }(e,{1})\in G_E \}\}$ be another soft topology over $X$. The identity function $i:(X, \T, E)\to(X, \sigma, E)$ is soft $sw$-homeomorphism and $(X, \T, E)$ is soft Hausdorff but $(X, \sigma, E)$ is not soft $T_0$ (consequently, not soft $T_1$).
	\end{example}

	\section{Conclusion and future works}\label{se7}\
	
	Uncertain phenomena exist in many aspects of our daily life. One of the theories proposed to handle uncertainty is the soft set theory. Typologists applied soft sets to initiate a new mathematical structure called soft topology which is the framework of this study.
	
	In this article, we have introduced the concept of soft somewhat open sets as a new generalization of soft open sets. We have shown that the family of soft somewhat open sets lies between the families of soft semiopen sets and soft somewhere dense sets on one hand. On the other hand, the families of soft somewhat open sets and soft $\beta$-open sets are independent of each other. These relationships have been illustrated and main properties have been established with the aid of examples.  Then, we have employed soft somewhat open sets to define soft somewhat continuous, and soft somewhat open functions. We have characterized these two functions and investigated the main features. Some nice connections under certain soft topological space are studied in \cite{zanyar}. The reason for defining these concepts was to discuss the differences between soft homeomorphism and soft somewhat homeomorphism regarding the preservation of certain soft topological properties.
	
	In the upcoming work, we plan to study some topological concepts using soft somewhat open sets such as soft compactness, soft Lindel\"{o}fness, and soft connectedness. The investigation of some applications soft somewhat homeomorphisms is also planned
	
	Furthermore, we explore soft somewhat open sets in the content of supra soft topology.
	
	\bigskip
	
	\textbf{Acknowledgements.} We would like to thank the three anonymous referees for valuable comments that improved the quality of the paper.
	
	%\smallskip
	%
	
	%%%%%%%%%%%%%%%%%%%%%%%%%%%%%%%%%%%%%%%%%%%%%%%%%%%%%%%%%%%%%%%%%%%%%%%%%%%%%%%%%%%%%%%%%%%%%%%%%%%%%%%%%%%%%%%%%%%%%%%
\end{document}